\def\th@plain{\slshape}
\patchcmd{\th@remark}{\itshape}{\slshape}{}{}\makeatother
\newcounter{bidon}
\newcommand \he{^{\mathrm{h}}}
\newcommand \He{^{\mathrm{H}}}
\newtheorem{theorem}{Théorème}
\newtheorem{thm}[theorem]{Theorem}
\newtheorem{lemma}[theorem]{Lemme}
\newtheorem{corollary}[theorem]{Corolaire}
\newtheorem{proposition}[theorem]{Proposition}
\theoremstyle{definition}
\newtheorem{definition}[theorem]{Définition}
\theoremstyle{remark}
\newcounter{MF}
\newcommand\stMF{\stepcounter{MF}}
\newcommand{\lec}{\stMF\ifodd\value{MF}lecteur\xspace\else 
lectrice\xspace\fi}
\newcommand{\lecs}{\stMF\ifodd\value{MF}lecteurs\xspace\else 
lectrices\xspace\fi}
\newcommand{\alec}{\stMF\ifodd\value{MF}au lecteur\xspace\else%
à la lectrice\xspace\fi}
\newcommand{\dlec}{\stMF\ifodd\value{MF}du lecteur\xspace\else%
de la lectrice\xspace\fi}
\newcommand{\llec}{\stMF\ifodd\value{MF}le lecteur\xspace\else la lectrice\xspace\fi}
\newcommand{\Llec}{\stMF\ifodd\value{MF}Le lecteur\xspace\else La lectrice\xspace\fi}
\newcommand{\lui}{\ifodd\value{MF}lui\xspace\else
elle\xspace\fi}
\newcommand{\celui}{\ifodd\value{MF}celui\xspace\else
celle\xspace\fi}
\newcommand{\ceux}{\ifodd\value{MF}ceux\xspace\else
celles\xspace\fi}
\newcommand{\er}{\ifodd\value{MF}er\xspace\else
ère\xspace\fi}
\newcommand{\eux}{\ifodd\value{MF}eux\xspace\else
elles\xspace\fi}
\newcommand{\eUx}{\ifodd\value{MF}eux\xspace\else
euse\xspace\fi}
\newcommand{\eUX}{\ifodd\value{MF}eux\xspace\else
euses\xspace\fi}
\newcommand{\leux}{\ifodd\value{MF}leux\xspace\else
leuse\xspace\fi}
\newcommand{\il}{\ifodd\value{MF}il\xspace\else
elle\xspace\fi}
\newcommand{\ien}{\ifodd\value{MF}ien\xspace\else
ienne\xspace\fi}
\newcommand{\e}{\ifodd\value{MF}\xspace \else e\xspace\fi}
\newcommand{\n}{\ifodd\value{MF}n\xspace\else nne\xspace\fi}
\newcommand{\la}{\@ifstar{\ifodd\value{MF}le\else
la\fi}{\stMF\ifodd\value{MF}le\else la\fi}}
\newcommand \thref[1] {théorème~\ref{#1}}
\newcommand\oge{\leavevmode\raise.3ex\hbox{$\scriptscriptstyle\langle\!\langle\,$}}
\newcommand\feg{\leavevmode\raise.3ex\hbox{$\scriptscriptstyle\,\rangle\!\rangle$}}
\newcommand\gui[1]{\oge{#1}\feg}
\renewcommand\paragraph[1]{

\medskip \noindent $\bullet$ \textbf{#1}}
\newcommand \cad {c'est-à-dire\xspace}
\newcommand \ssi {si, et seu\-lement si, }
\newcommand \Alg {$\gA$-\alg}
\newcommand \Algs {$\gA$-\algs}
\newcommand \adv {anneau de valuation\xspace}
\newcommand \agq{algé\-bri\-que\xspace}
\newcommand \agqs{algé\-bri\-ques\xspace}
\newcommand \alg {algè\-bre\xspace}
\newcommand \algs {algè\-bres\xspace}
\newcommand \algo{algo\-rithme\xspace}
\newcommand \algq{al\-go\-rith\-mi\-que\xspace}
\newcommand \alo {an\-neau lo\-cal\xspace}
\newcommand \cdi{corps discret\xspace}
\newcommand \dcd {rési\-duel\-lement dis\-cret\xspace}
\newcommand \dimm {description immédiate\xspace}
\newcommand \demo{démon\-stra\-tion\xspace}     
\newcommand \demos{démon\-stra\-tions\xspace}
\newcommand \dfn{défi\-nition\xspace}
\newcommand \dvz {di\-viseur de zéro\xspace}
\newcommand \egmt {éga\-lement\xspace}
\newcommand \egt {éga\-li\-té\xspace}
\newcommand \elr{élé\-men\-taire\xspace}
\newcommand \elt{élé\-ment\xspace}  
\newcommand \elts{élé\-ments\xspace}
\newcommand \fit {fidè\-lement\xspace}
\newcommand \fcn {factorisation\xspace}
\newcommand \fpte {\fit plate\xspace}
\newcommand\gnlt{géné\-ra\-lement\xspace}  
\newcommand\gnn{géné\-ra\-li\-sa\-tion\xspace}
\newcommand \icl {inté\-gra\-lement clos\xspace}
\newcommand \idema {idéal maxi\-mal\xspace}
\newcommand \idep {idéal pre\-mier\xspace}
\newcommand \idemi {\idep minimal\xspace}
\newcommand \idm {idem\-po\-tent\xspace}
\newcommand \idp {idéal prin\-ci\-pal\xspace}
\newcommand \ivmp {il en va de même pour\xspace}
\newcommand \mo {mo\-no\"{\i}de\xspace}
\newcommand \ncr{néces\-saire\xspace}       
\newcommand \ncrs{néces\-saires\xspace}
\newcommand \pgn {polygone de Newton\xspace}
\newcommand \pol {poly\-nôme\xspace}
\newcommand \pols {poly\-nômes\xspace}
\newcommand \polcar {\pol carac\-té\-ris\-tique\xspace}
\newcommand \prt {pro\-pri\-été\xspace}
\newcommand \sad {\salg dynamique\xspace}
\newcommand \sads {\salgs dynamiques\xspace}
\newcommand \salg {structure \agq}
\newcommand \salgs {structures \agqs}
\newcommand \sdz {sans \dvz}
\newcommand \tho {théo\-rème\xspace}
\newcommand \thos {théo\-rèmes\xspace}
\newcommand \uvle {uni\-ver\-selle\xspace}
\newcommand \zed {z\'{e}ro-di\-men\-sionnel\xspace}
\newcommand \zedr {\zed réduit\xspace}
\newcommand \cof {cons\-truc\-tif\xspace}
\newcommand \cofs {cons\-truc\-tifs\xspace}
\newcommand \cov {cons\-truc\-tive\xspace}
\newcommand \covs {cons\-truc\-tives\xspace}
\newcommand \coma {\maths\covs}
\newcommand \clama {\maths clas\-siques\xspace}
\renewcommand \cot {cons\-truc\-ti\-vement\xspace}
\newcommand \maths {mathé\-ma\-tiques\xspace}
\newcommand {\junk}[1]{}
\renewcommand \leq{\leqslant}
\renewcommand \geq{\geqslant}
\newcommand\eti{^\times}
\newcommand \etoz{$^*$}
\newcommand \Ati {\gA^{\!\times}}
\newcommand \aqo[2] {#1\sur{\gen{#2}}\!}
\newcommand \gen[1] {\left\langle{#1}\right\rangle}
\newcommand \so[1] {\left\{\,{#1}\, \right\}}
\newcommand \sO[1]{\big\{{#1}\big\}}
\newcommand \sotq[2]{\so{#1\mathrel{;}#2}}
\newcommand \sotQ[2]{\sO{#1\mathrel{;}#2}}
\newcommand \sur[1] {\!\left/#1\right.}
\renewcommand \leq{\leqslant}
\renewcommand \geq{\geqslant}
\newcommand \som {\sum\nolimits}
\newcommand\Tsbf[1]{\hyperref[Ax#1]{\tsbf{#1}}}
\newcommand\Sa[1]{\hyperref[theorie#1]{\sa{#1}}}
\newcommand \ov[1] {\overline{#1}}
\def\revddots{\mathinner{\mkern1mu\raise\p@
\vbox{\kern7\p@\hbox{.}}\mkern2mu
\raise4\p@\hbox{.}\mkern2mu\raise7\p@\hbox{.}\mkern1mu}}
\newcommand \gA {\mathbf{A}}
\newcommand \gB {\mathbf{B}}
\newcommand \gC {\mathbf{C}}
\newcommand \gK {\mathbf{K}}
\newcommand \gL {\mathbf{L}}
\newcommand \gV {\mathbf{V}}
\newcommand \gW {\mathbf{W}}
\newdimen\xyrowsp
\newcommand{\SCO}[6]{
\xymatrix @R = \xyrowsp {
                                  &1 \ar@{-}[dl] \ar@{-}[dr] \\
#3 \ar@{-}[ddr]                   &   & #6 \ar@{-}[ddl] \\
                                  &\bullet\ar@{-}[d] \\
                                  &\bullet   \\
#2 \ar@{-}[ddr] \ar@{-}[uur]      &   & #5 \ar@{-}[ddl] \ar@{-}[uul] \\
                                  &\bullet \ar@{-}[d] \\
                                  &\bullet  \\
#1 \ar@{-}[uur]                   &   & #4 \ar@{-}[uul] \\
                                  & 0 \ar@{-}[ul] \ar@{-}[ur] \\
}
}
\renewcommand \deg {\MA{\mathrm{deg}}}
\newcommand \Frac {\MA{\mathrm{Frac}}}
\newcommand \Ker {\MA{\mathrm{Ker}}}
\newcommand \Rad {\MA{\mathrm{Rad}}}
\newcommand\MA[1]{\mathop{#1}\nolimits}
\newcommand\fm{\mathfrak{m}}
\newcommand\fp{\mathfrak{p}}
\newcommand \AX {\gA[X]}
\newcommand \KX {\gK[X]}
\newcommand \Kx {\gK[x]}
\newcommand \VX {\gV[X]}
\DeclareMathAlphabet{\mathpzc}{OT1}{pzc}{m}{it}
\patchcmd{\sectionmark}{\MakeUppercase}{}{}{}
\begin{document}
\stMF

\title{À propos d'un théorème de de Felipe et Teissier sur la comparaison de deux hensélisés
dans le cas non~noethérien}

\author{M.-E. Alonso\thanks{Universitad Complutense, Madrid, Espa\~na. {\tt M\(_-\)Alonso@Mat.UCM.Es}}
 \and  Henri Lombardi
\thanks{Univ. de Franche-Comt\'e, 25030
Besan\c{c}on cedex, France. {\tt henri.lombardi@univ-fcomte.fr}}
\and
Stefan Neuwirth
\thanks{Univ. de Franche-Comt\'e, 25030
Besan\c{c}on cedex, France. {\tt stefan.neuwirth@univ-fcomte.fr}}}
\maketitle


\tableofcontents

\begin{abstract} 
This paper gives an elementary proof of a theorem by de Felipe and Teissier in the paper ``Valuations and henselization'' (arxiv.org/abs/1903.10793v1), to appear in Math. Annalen. The theorem compares two henselizations of a local domain dominated by a valuation domain. Our proofs are written in the constructive Bishop style.
\end{abstract}

\noindent MSC: 13B40, 13J15, 03F65

\section*{Introduction}
\addcontentsline{toc}{section}{Introduction}
Cet article est écrit dans le style des \coma à la Bishop (\cite{Bi67,BB85,BR1987,MemoireFFR,CACM,MRR,Yen2015}).
Il peut être vu comme la continuation naturelle des articles \cite{ALP08,CLR2001,KL00,KLP03}.

Le \tho auquel le titre réfère se trouve dans la prépublication [BT2019, \textsc{de Felipe} et \textsc{Teissier}, \textsl{Valuations and henselization}, 2012, \url{https://arxiv.org/abs/1903.10793v1}] écrit en prolongement de \cite{HOST2012}. L'article a été accepté à Math. Annalen.
Le \tho est le suivant.


\begin{thm} \label{thFT}
Let $R$ be a local domain and let $R\he$ be its henselization. If $v$ is a valuation centered in $R$, then:
\begin{enumerate}
\item There exists a unique prime ideal $H(v)$ of $R\he$ lying over the zero ideal of $R$ such that $v$  extends to a valuation $v'$ centered in $R\he/H(v)$ through the inclusion $R\subseteq  R\he/H(v)$. In addition, the ideal $H(v)$ is a minimal prime and the extension $v'$ is unique.
\item With the notation of 1., the valuations $v$ and $v'$ have the same value group.
\end{enumerate}
 \end{thm}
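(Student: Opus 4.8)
The plan is to reduce the whole statement to the henselization of the valuation ring of $v$, together with the universal property of henselization. Write $K:=\Frac(R)$ and let $R_v\subseteq K$ be the valuation ring of $v$, so that $R\subseteq R_v$ is a local inclusion; let $R_v\he$ be its henselization. I would lean on the following facts about henselizations, all of which admit a constructive treatment via the presentation of $R\he$ as a filtered colimit of étale $R$-algebras: (i) $R_v\he$ is again a valuation ring, carrying a valuation $w$ that extends $v$ and has the same value group (and residue field) as $v$; (ii) $R\he\otimes_R K$ is a reduced zero-dimensional ring (a filtered colimit of étale $K$-algebras), so there are no strict inclusions among primes of $R\he$ lying over $(0)$; (iii) $R\to R\he$ is faithfully flat and injective, and a quotient of a henselian local ring is henselian. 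By the universal property applied to the local homomorphism $R\to R_v\to R_v\he$ there is a unique local homomorphism $\varphi\colon R\he\to R_v\he$ over $R$. Set $H(v):=\ker\varphi$ and let $v'$ be the restriction of $w$ to $L:=\Frac(R\he/H(v))$ along the embedding $R\he/H(v)\hookrightarrow R_v\he$.

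First I would check that $H(v)$ has the required properties and is minimal. Since $R\to R_v\to R_v\he$ is injective (by (i) $R_v\he$ is a domain containing $R_v$), $H(v)$ lies over $(0)$, hence $R\hookrightarrow R\he/H(v)$ and $K\subseteq L$; the valuation $v'$ is centered in $R\he/H(v)$ because $\varphi$, and therefore $R\he/H(v)\hookrightarrow R_v\he$, is local, and $v'$ extends $v$ because $w$ does. If $\mathfrak q\subseteq H(v)$ is a prime of $R\he$ then $\mathfrak q\cap R=(0)$, so $\mathfrak q$ also lies over $(0)$, and fact (ii) forces $\mathfrak q=H(v)$; thus $H(v)$ is a minimal prime. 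For uniqueness of $H(v)$: given another prime $H'$ over $(0)$ and a valuation $v''$ centered in $R\he/H'$ extending $v$, let $W$ be the valuation ring of $v''$ and $W\he$ its henselization. Factoring the local homomorphism $R\to R\he/H'\hookrightarrow W\hookrightarrow W\he$ through $R\he$ gives $\psi\colon R\he\to W\he$ with $\ker\psi=H'$; factoring $R_v\to W\hookrightarrow W\he$ through $R_v\he$ and invoking uniqueness in the universal property shows $\psi=\theta\circ\varphi$ for some $\theta\colon R_v\he\to W\he$, so $H'=\ker\psi\supseteq\ker\varphi=H(v)$; since both lie over $(0)$, fact (ii) gives $H'=H(v)$.

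Uniqueness of $v'$ is where henselianity does the essential work. Suppose $v''$ is a valuation on $L$ extending $v$ and centered in $R\he/H(v)$, with valuation ring $V$; its henselization $V\he$ is a henselian valuation ring whose valuation restricts to $v''$ on $L$. Since $R_v\hookrightarrow V$ is a local inclusion (because $v''$ extends $v$), the universal property yields a local homomorphism $R_v\he\to V\he$ over $R_v$, which is injective (its kernel lies over $(0)$ in $R_v$ and $R_v\he\otimes_{R_v}K=\Frac(R_v\he)$ is a field); as $R_v\he$ is maximal among local subrings of its fraction field under domination, the valuation of $V\he$ restricts to $w$ on $\Frac(R_v\he)$. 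Since $L\subseteq\Frac(R_v\he)\subseteq\Frac(V\he)$ and $v'=w|_L$ by construction, both $v'$ and $v''$ are the restriction of the valuation of $V\he$ to $L$, so $v''=v'$. Part 2 is then immediate: the value group of $v'$ lies between that of $v$ (because $v'$ extends $v$) and that of $w$, and these coincide by fact (i); hence $v$ and $v'$ have the same value group.

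The main obstacle will be the constructive input, and above all fact (i) — that the henselization of a valuation ring is again a valuation ring with unchanged value group and residue field — which classically rests on Hensel-type arguments that must be made algorithmic; fact (ii) similarly requires a constructive handle on étale $K$-algebras as finite products of finite separable extensions of $K$. A second point of care is the uniqueness of $v'$: a valuation on $K$ may admit several extensions to the algebraic extension $L$, and singling out the one whose centre dominates the \emph{henselian} ring $R\he/H(v)$ is exactly where henselianity enters; the bookkeeping with $V\he$ and $R_v\he$ sketched above is what makes this precise. Finally, the phrases ``minimal prime'' and ``zero-dimensional'' should be read through their constructive counterparts — one works directly with the zero-dimensional reduced ring $R\he\otimes_R K$ and its idempotents — so that the uniqueness assertion for $H(v)$ is constructively meaningful.
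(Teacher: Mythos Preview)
Your proof is correct and follows the same high-level outline the paper sketches in its introduction: both reduce the statement to the canonical local map $\varphi\colon R\he\to R_v\He$ furnished by the universal property, set $H(v)=\ker\varphi$, and read off uniqueness of $H(v)$, uniqueness of $v'$, and part~2 from the universal property of the henselised valued field together with the fact (from \cite{KL00}) that henselisation of a valuation ring leaves the value group unchanged. The substantive difference lies in how minimality of $H(v)$ is established.

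You invoke your fact (ii): $R\he\otimes_R K$ is reduced and zero-dimensional (being a filtered colimit of étale $K$-algebras), so primes over $(0)$ admit no strict inclusions. The paper deliberately avoids this global structural statement and instead proves minimality at each finite stage of the henselisation. For a single Nagata polynomial $f$ and the associated map $\theta_f\colon \gA_f\to\gK[\beta]$, the paper computes the characteristic polynomial $g(T)$ of $y=q(x)$ in the free $\gA$-module $\gA[x]$, factors its image over $\gK$ as $g_1(T)=T^k h_1(T)$ with $h_1(0)\neq 0$, and from the corresponding decomposition of $g$ produces an explicit annihilator $\zeta=h(\gamma)\in S_f$ with $\zeta\gamma=0$ whenever $\gamma\in\fp_f$; the Newton polygon is used to decide effectively whether $\delta=q(\beta)$ vanishes. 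This is then iterated through the tower via the reduced, minimally valued generalisation (\thref{thFeTe2}). What your approach buys is brevity and conceptual transparency; what the paper's approach buys is an explicit algorithm, independence from the étale description of $R\he\otimes_R K$, and a fully constructive argument—precisely the ``constructive input'' you flag as the main obstacle in your final paragraph.
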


\medskip La technique de démonstration dans [BT2019] est assez sophistiquée et utilise des méthodes topologiques\footnote{Il y est question de suite pseudo convergentes dans une complétion ad hoc.}. Mais comme la construction du hensélisé d'un anneau local résiduellement discret et celle du hensélisé d'un domaine de valuation relèvent de méthodes \covs purement \agqs (voir \cite{ALP08} et \cite{KL00}), 
on peut à priori espérer une démonstration \cov purement \agq du résultat.

C'est ce que nous réalisons dans cet article.

\smallskip Dans la section 1, nous donnons le contexte constructif du \tho.
C'est évidemment le même que le contexte en \clama mais nous donnons quelques précisions \ncrs pour que les résultats aient tous une signification \algq.
Par exemple pour construire le hensélisé d'un anneau local, nous avons besoin que cet anneau soit local au sens \cof et que le corps résiduel possède un test d'\egt.

Nous indiquons sommairement dans cette section les \thos \cofs précédemment établis concernant le hensélisé d'un anneau local et le hensélisé d'un corps valué.

\smallskip Nous notons que le hensélisé d'un corps valué est unique, donc le \thref{thFT} est en fait un \tho qui compare deux hensélisés. Celui de l'anneau local intègre de départ et celui de l'anneau de valuation qui domine  cette annau local intègre. Le point \textsl{1} du \tho dit que le morphisme local naturel du premier hensélisé dans le second a pour noyau un \idep minimal. Le point \emph{2} est selon nous un simple rappel de ce que lorsqu'on étend un \adv à son hénsélisé, le groupe de valeurs ne change pas (ni d'ailleurs le corps résiduel).

Pour démontrer le \thref{thFT}, il suffit de le faire en remplaçant les hensélisés par des étages finis de leurs constructions simultanées, c'est ce que nous faisons dans les sections suivantes.

\smallskip La section 2 traite le cas d'un anneau local intègre avec le \thref{thFeTe1}. Ce cas est à priori suffisant pour conclure mais cela fait appel à un \tho dont nous ne connaissons pas de \demo simple: tout étage fini de la construction d'un hensélisé peut être réalisé en une seule étape. D'où la section 3.

\smallskip  La section 3 traite le cas d'un anneau local réduit minimalement valué avec le \thref{thFeTe2}. Ce cas permet de suivre la construction des hensélisés étage par étage.
En outre la section 2 n'est plus vraiment \ncr.

\smallskip Enfin une annexe en section 4 traite pour le fun le cas d'un anneau local minimalement valué qui n'est pas supposé réduit avec le \thref{thFeTe2}. Ceci nous donne donc la \gnn suivante du \thref{thFT}.

\begin{thm} \label{thFT2}
Let $R$ be a local ring, $V$ a valuation domain and $\varphi:R\to V$ a local morphism whose kernel is a minimal prime. Let $R\he$ the henselization of $R$ as a local ring and $V\He$ the henselization of $V$ as a valuation domain. Then the kernel of the canonical local morphism $\psi:R\he\to V\He$ is a minimal prime.
 \end{thm}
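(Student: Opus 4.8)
Le plan est de se ramener au cas réduit déjà traité au \thref{thFeTe2}, par passage au quotient par le noyau. On posera $\fp=\ker\varphi$ : c'est par hypothèse un \idemi de $R$, donc $R/\fp$ est un anneau local intègre (en particulier réduit), de même corps résiduel que $R$, et $\varphi$ se factorise en $R\to R/\fp\hookrightarrow V$, l'injection $\bar\varphi\colon R/\fp\hookrightarrow V$ étant un morphisme local dont le noyau $(0)$ est l'\idemi de $R/\fp$. On est ainsi exactement dans la situation du \thref{thFeTe2} pour le triplet $(R/\fp,\bar\varphi,V)$.

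Le pas suivant utilisera que la construction du hensélisé est compatible au passage au quotient (voir \cite{ALP08}) : l'homomorphisme canonique $R\he\to(R/\fp)\he$ est surjectif de noyau l'idéal $\fp\,R\he$, de sorte que $(R/\fp)\he=R\he/\fp\,R\he$. Comme $V\He$ est hensélien, le morphisme local composé $R/\fp\hookrightarrow V\hookrightarrow V\He$ s'étend de façon unique en un morphisme local $\overline\psi\colon(R/\fp)\he\to V\He$, lequel n'est autre que le morphisme induit par $\psi$ modulo $\fp\,R\he$, puisque $\psi$ annule $\fp\,R\he$ (il envoie $\fp$ sur $\varphi(\fp)=0$). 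Le \thref{thFeTe2} donnera alors que $\ker\overline\psi$ est un \idemi de $(R/\fp)\he$, avec $\ker\overline\psi=\ker\psi/\fp\,R\he$.

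Il restera à en déduire que $\ker\psi$ est lui-même un \idemi de $R\he$. Que ce soit un idéal premier vient de ce que $R\he/\ker\psi\cong(R/\fp)\he/\ker\overline\psi$ se plonge dans le domaine $V\He$. Pour la minimalité on emploiera la \carn : un premier $\fq$ est minimal \ssi pour tout $a\in\fq$ il existe $b\notin\fq$ et $n$ avec $a^nb=0$. Étant donné $a\in\ker\psi$, son image $\bar a$ dans $(R/\fp)\he$ est dans $\ker\overline\psi$, d'où $\bar b\notin\ker\overline\psi$ et $n$ avec $\bar a^n\bar b=0$ ; en relevant $\bar b$ en $b\in R\he$ on obtient $a^nb\in\fp\,R\he$, soit $a^nb=\sum_k c_kr_k$ avec $c_k\in\fp$, $r_k\in R\he$. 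Comme $\fp$ est un \idemi de $R$, on disposera pour chaque $k$ de $d_k\in R\setminus\fp$ et $m_k$ avec $c_k^{m_k}d_k=0$, et un argument de tiroirs élémentaire dans le développement d'une puissance assez grande fournira $N$ tel que $(a^nb)^N\,d=0$ dans $R\he$, où $d=\prod_k d_k$. Enfin, $V\He$ étant un domaine et $\psi(b)$, $\psi(d_k)=\varphi(d_k)$ étant non nuls, l'\elt $b^Nd$ n'appartient pas à $\ker\psi$, et l'égalité $a^{nN}\cdot(b^Nd)=0$ conclura.

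Le point délicat sera double. D'une part, tout doit rester \cov : la \carn des \idemis, la compatibilité du hensélisé au passage au quotient, et le fait que $V\He$ soit un domaine muni d'un test de non-nullité doivent être disponibles dans le cadre fixé à la section 1 et dans \cite{ALP08,KL00} — on y prendra garde. D'autre part, et c'est le véritable objet de l'annexe, il faut s'assurer que le \thref{thFeTe2}, énoncé pour un anneau local \emph{réduit}, s'applique sans restriction à $R/\fp$ ; comme $R/\fp$ est même intègre on pourrait invoquer le \thref{thFeTe1}, mais on préférera passer par le \thref{thFeTe2} pour ne pas dépendre du résultat difficile — réalisation d'un étage fini d'un hensélisé en une seule étape — utilisé à la section 2. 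Une variante consistera à reprendre directement la \recu étage par étage de la section 3 en autorisant les nilpotents dans $R$ : à chaque extension \elr intervenant dans la construction simultanée des hensélisés il suffira de contrôler que la fibre au-dessus du noyau courant reste une \alg \zede réduite — donc sans \ideps distincts comparables — ce qui ne voit pas les nilpotents, cette fibre étant une \alg sur un corps.
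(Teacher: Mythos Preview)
Your approach is correct but takes a genuinely different route from the paper's. You reduce globally to the reduced case by passing to $R/\fp$: using the identification $(R/\fp)\he \simeq R\he/\fp R\he$, you apply the iterated form of \thref{thFeTe2} (the Conclusion of Section~3) to the integral ring $R/\fp$, and then lift the minimality of $\ker\overline\psi$ back to $\ker\psi$ by a pigeonhole argument on the nilpotents produced by the minimal prime~$\fp$ of~$R$. The paper instead proves a direct one-step analogue, \thref{thFeTe3}, for a not-necessarily-reduced minimally valued local ring, by rerunning the proof of \thref{thFeTe2} verbatim and replacing each occurrence of \gui{$ba_j=0$} by \gui{$b_ja_j$ is nilpotent}; the iteration then proceeds exactly as in Section~3. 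Your \gui{variante} in the last paragraph is essentially this, though the paper does not phrase it in terms of zero-dimensional fibres.

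What each route buys: the paper's argument stays strictly step-by-step and needs no lemma beyond those already used in Section~3. Your argument is more conceptual and reuses the reduced case wholesale, at the price of invoking the compatibility $(R/\fp)\he \simeq R\he/\fp R\he$ --- an easy consequence of the universal property together with the fact that quotients of henselian local rings are henselian, but a step the paper deliberately bypasses. Both arguments are constructive once one checks, as you flag, that the characterisation of minimal primes, the discreteness of $V\He$, and the finitary nature of membership in $\fp R\he$ are all available in the framework of Section~1.
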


\section{Le contexte} 
Nous fixons dans cette section la terminologie \cov usuelle et nous précisons le contexte  qui permet la construction de hensélisés en \coma.

On note $\Ati$  le groupe des unités de l'anneau commutatif \(\gA\).
Le radical de Jacobson $\Rad(\gA)$ de  \(\gA\) est l'idéal formé par les \elts 
\(x\)  qui satisfont $1+x\gA\subseteq \Ati$.   

Un \textsl{corps discret} est un anneau non trivial dans lequel tout \elt est nul ou inversible, ceci de manière explicite (il y a en particulier un test à $0$).

Si $\gA$ est non trivial et si tout \elt de \(\gA\) est nul ou régulier on dit que \(\gA\) est un anneau \textsl{intègre}. Un anneau est dit \textsl{sans diviseur de zéro} lorsque $xy=0$ implique $x=0$ ou $y=0$. Cette notion est \cot un peu plus faible que la notion d'anneau intègre. Un anneau non trivial \sdz est intègre \ssi il possède un test à zéro, \cad si l'égalité est décidable. Un anneau \sdz est \textsl{réduit}: tout \elt nilpotent est nul.

L'\textsl{anneau total de fractions} d'un anneau \(\gA\), noté $\Frac\gA$, est obtenu en forçant l'inversibilité des \elts réguliers. Si \(\gA\) est intègre, 
$\Frac\gA$ est un \cdi, appelé le \textsl{corps de fractions de $\gA$}. Par ailleurs, tout sous-anneau d'un \cdi est intègre.

Un anneau est \textsl{normal} s'il satisfait l'axiome suivant\footnote{Tout \idp est \icl.}:
$$ x^r=a_1bx^{r-1}+a_2b^2 x^{r-2}+\cdots +a_rb^r\Rightarrow \exists y\;\; x=by.$$
Un anneau normal intègre est appelé un anneau \textsl{\icl}. Il est \icl dans son corps de fraction: tout \elt de $\Frac\gA$ entier sur \(\gA\) est dans \(\gA\)\footnote{En \clama un anneau est normal \ssi tout localisé en un \idep est \icl.}.

Un anneau $\gA$ est dit \textsl{\zed} si pour tout \(x\) il existe un \(y\) et un entier \(n\geq 0\) tels que \(x^n(1-xy)=0\). Un anneau \zed non trivial et \sdz est un \cdi.   

\subsubsection*{Anneau local hensélien} 
Un \textsl{anneau local} \(\gA\) est un anneau commutatif unitaire non trivial dans lequel est satisfait l'axiome
$$ x+y\in \Ati \Rightarrow x \in \Ati \hbox{ ou }y \in \Ati.
$$
Ici le \gui{ou} a son sens \cof explicite.
Un morphisme d'anneaux $\gA\to\gB$ entre anneaux locaux est appelé un \textsl{morphisme local} s'il réfléchit les unités, \cad si \hbox{$\varphi(x)\in\gB\eti$} implique $x\in\Ati$. 

Pour un anneau local, on note souvent $\fm_\gA=\Rad(\gA)$; c'est un \idep maximal\footnote{Mais cela ne peut pas être pris comme \dfn \cov du radical de Jacobson.}.

Un anneau local \(\gA\) possède un \textsl{corps résiduel} \(\kappa_\gA=\gA/\fm_\gA\)\footnote{En \coma on appelle \textsl{corps de Heyting}, ou plus simplement \textsl{corps} un anneau local dont le radical de Jacobson est nul.}. 
Le morphisme naturel $\gA\to \kappa_\gA$ est local. Un quotient d'un \alo est un \alo de même corps résiduel et le morphisme canonique sur le quotient est un morphisme local.

Un anneau local est dit \textsl{résiduellement discret} si son corps résiduel est discret. Cela revient à dire que pour tout \(x\in\gA \), on a \gui{ \(x\in\Rad(\gA)\) ou \( x\in\Ati\)}
de manière explicite.

Un anneau est \textsl{connexe} si tout \idm est égal à $0$ ou $1$. Les anneaux intègres et les anneaux locaux sont connexes. Un anneau est local \zed \ssi tout \elt est inversible ou nilpotent. Un anneau non trivial \zedr et connexe est un \cdi.

\begin{definition} \label{defi1.1} 
Soit \((\gA,\fm_\gA)\) un anneau local. Nous disons que \(\gA\) est {\em hensélien} si tout \pol unitaire
\(f(X)=X^n+ \cdots +a_1 X+a_0\in A[X]\) avec \(a_1\in \Ati\) et 
\(a_0\in\fm_\gA\) a une racine dans \(\fm_\gA\).
\end{definition}

Un \pol tel que $f(X)$ dans la \dfn ci-dessus est appelé un \textsl{\pol de Nagata}. Plus \gnlt nous appellerons \textsl{code de Hensel} un couple $(f,a)$ où $f\in \AX$, $a\in\gA$, $f(a)\in\fm_\gA$ et $f'(a)\in\Ati$.
Ainsi $(f,0)$ est un code de Hensel si \(f\) est un \pol de Nagata.
Dans un anneau local hensélien, si $(f,a)$ est un code de Hensel, il existe un unique $\alpha\in\gA$ tel que $f(\alpha)=0$ et $\alpha-a\in\fm_\gA$.

Par \dfn, un \textsl{hensélisé} de \((\gA,\fm_\gA)\) est donné par un anneau local hensélien \((\gA\he,\fm_{\gA\he})\) et un morphisme local $\varphi\he:\gA\to\gA\he$ qui factorise de manière unique tout morphisme local de \(\gA\) vers un anneau local hensélien.
S'il existe, le hensélisé est unique à isomorphisme local unique près.

Lorsqu'un anneau local \(\gA\) est résiduellement discret\footnote{Rappelons qu'en \clama tout anneau local est résiduellement discret.}, il possède en \coma\footnote{Donc en \clama tout anneau local possède un hensélisé, \gui{construit} de la même manière que ce que l'on fait en \coma, mais en utilisant le principe du tiers exclu pour décider l'\egt dans le corps résiduel.} un hensélisé qui peut être construit par étapes. Une étape \elr consiste à ajouter un zéro à la Hensel de manière optimale (\cad que l'extension doit satisfaire la \prt universelle adéquate).

\begin{definition} \label{defi1.2}
Soit \(f(X)=X^n+ \cdots + a_1 X+a_0\in \AX\) un \pol de Nagata. Nous notons
 \(\gA_f\) l'anneau défini de la manière suivante: soit \(\gB=\gA[x]=\aqo{\AX}{f(X)}\)
(où \(x\) est la classe de \(X\)) et soit
\(U_f\subseteq \gB\) le \mo de \(\gB\) défini par
\[U_f=\sotQ{ g(x)\in \gB}  {g(X)\in \gA[X],\ \ov{g(0)}\in (\kappa_\gA)\eti} .
\]
Alors \(\gA_f=U_f^{-1}\gB\). 
\end{definition}

\begin{theorem}[\cite{ALP08}] \label{DeAaAf} Soit \((\gA,\fm)\) un \alo \dcd et \(f(X)\in \AX\) un \pol de Nagata.
\begin{enumerate}
\item L'anneau \(\gA_f\) est un anneau local résiduellement discret.
\item  Son \idema est \(\fm\cdot \gA_f\).
\item Son corps résiduel est (canoniqueent isomorphe à) \(\kappa_\gA\).
\item L'anneau \(\gA_f\) est une \alg \fpte sur \(\gA\). En particulier on peut identifier \(\gA\) avec son image dans \(\gA_f\), et écrire \(\gA\subseteq \gA_f\).
\item Si $\varphi:\gA\to\gC$ est un morphisme local d'anneaux locaux résiduellement discrets et si $\varphi(f)$ a un zéro dans $\fm_\gC$, 
il existe un unique morphisme local \(\gA_f\to \gC\) qui factorise $\varphi$. 
\end{enumerate}
\end{theorem}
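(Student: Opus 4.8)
Le plan est d'exploiter directement la description explicite $\gA_f=U_f^{-1}\gB$, où $\gB=\gA[x]=\aqo{\AX}{f(X)}$ est un \gA-module libre de base $1,x,\dots,x^{n-1}$ (avec $n\geq 1$ puisque $f$ a un terme $a_1X$). On commencerait par trois observations élémentaires. Tout \elt de $\gB$ admet un représentant $g(x)$ avec $\deg g<n$, et si $g(x)=h(x)$ dans $\gB$ alors $g-h\in\gen{f}$, donc $g(0)-h(0)\in a_0\gA\subseteq\fm_\gA$: ainsi la condition $g(0)\in\Ati$ — équivalente, $\gA$ étant \dcd, à $\ov{g(0)}\in(\kappa_\gA)\eti$ — ne dépend que de l'\elt $g(x)$ de $\gB$, et l'on peut toujours prendre un représentant de degré $<n$. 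Ensuite, $\fm_\gA\gB$ est la somme directe des $\fm_\gA x^j$ pour $0\leq j<n$. Enfin, l'identité pivot: en écrivant $f(x)=0$ sous la forme $x\,w=-a_0$ avec $w=a_1+a_2x+\cdots+x^{n-1}$, on a $w\in U_f$ (car $w=g(x)$ pour un $g$ vérifiant $g(0)=a_1\in\Ati$), donc $w$ est \iv dans $\gA_f$ et $x=-a_0\,w^{-1}\in\fm_\gA\gA_f$.

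Les points 1, 2 et 3 se traiteraient conjointement à partir de là. D'abord $\gA\to\gA_f$ est injectif: si $a\in\gA$ a une image nulle, il existe $u\in U_f$ avec $ua=0$ dans $\gB$; en prenant pour $u$ un représentant $g(x)$ de degré $<n$ et en identifiant les \coes sur la base libre, le \coe constant donne $g(0)\,a=0$ avec $g(0)\in\Ati$, d'où $a=0$; en particulier $\gA_f$ est non trivial. Vient ensuite la dichotomie clé: tout \elt de $\gA_f$ s'écrit $g(x)/u$ et, $\kappa_\gA$ étant discret, ou bien $g(0)\in\Ati$ et alors $g(x)\in U_f$, donc l'\elt est \iv, ou bien $g(0)\in\fm_\gA$ et alors $g(X)=g(0)+X\,h(X)$ donne, via $x\in\fm_\gA\gA_f$, que l'\elt appartient à $\fm_\gA\gA_f$. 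On vérifierait aussi $\fm_\gA\gA_f\subseteq\Rad(\gA_f)$: un \elt $b/u$ de $\fm_\gA\gA_f$ (avec $b\in\fm_\gA\gB$, $u\in U_f$) satisfait $1+b/u=(u+b)/u$ avec $u+b\in U_f$, son \gui{\coe constant} étant congru modulo $\fm_\gA$ à celui de $u$. Il en résulte que $\gA_f$ est local, \dcd, de radical de Jacobson — donc d'idéal maximal — égal à $\fm_\gA\gA_f$ (points 1 et 2). Enfin le même argument d'identification des \coes donne $\fm_\gA\gA_f\cap\gA=\fm_\gA$, de sorte que le morphisme canonique $\kappa_\gA\to\gA_f/\fm_\gA\gA_f=\kappa_{\gA_f}$, surjectif puisque $g(x)\equiv g(0)\pmod{\fm_\gA\gA_f}$, est un isomorphisme (point 3).

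Pour le point 4, $\gB$ est libre de rang $n\geq 1$ sur $\gA$ donc \fpte, et $\gA_f=U_f^{-1}\gB$, localisé de $\gB$, est plat sur $\gB$ donc sur $\gA$; comme $\gA$ est local et $\fm_\gA\gA_f\neq\gA_f$ (c'est l'idéal maximal trouvé), $\gA_f$ est \fpte sur $\gA$ par le critère local de platitude fidèle (\cite{CACM}), d'où en particulier $\gA\subseteq\gA_f$. Pour le point 5, soit $\varphi:\gA\to\gC$ un morphisme local vers un anneau local \dcd tel que $\varphi(f)$ (image de $f$ \coe par \coe) ait un zéro $\gamma\in\fm_\gC$. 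Le morphisme $\AX\to\gC$ prolongeant $\varphi$ et envoyant $X$ sur $\gamma$ annule $f$, d'où un morphisme $\gB\to\gC$ avec $x\mapsto\gamma$; pour $u=g(x)\in U_f$, son image $\varphi(g)(\gamma)$ est, $\gamma$ étant dans $\fm_\gC$, congrue modulo $\fm_\gC$ à $\varphi(g(0))\in\gC\eti$, donc est \iv dans $\gC$, et l'on factorise en $\psi:\gA_f\to\gC$. Ce $\psi$ est local: si $\psi(z)\in\gC\eti$, la dichotomie du point 1 donne $z\in\gA_f\eti$ ou $z\in\fm_\gA\gA_f$, et le second cas est exclu puisque $\varphi(\fm_\gA)\subseteq\fm_\gC$ (un \elt du radical de $\gA$ a une image dans le radical de $\gC$, car $\gC$ est \dcd et $\varphi$ réfléchit les unités). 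L'unicité vient de ce qu'un autre tel $\psi'$ envoie $x$ sur un zéro $\gamma'$ de $\varphi(f)$ situé dans $\fm_\gC$ (car $x\in\fm_{\gA_f}$ et $\psi'$ est local), et qu'un \pol de Nagata sur $\gC$ n'a qu'un zéro dans $\fm_\gC$: si $\gamma,\gamma'$ conviennent, $0=\varphi(f)(\gamma)-\varphi(f)(\gamma')=(\gamma-\gamma')\,q$ avec $q\equiv\varphi(a_1)\pmod{\fm_\gC}$, donc $q\in\gC\eti$ et $\gamma=\gamma'$; comme $\gA_f$ est engendré comme anneau par l'image de $\gA$, par $x$ et par les inverses des \elts de $U_f$, on obtient $\psi'=\psi$.

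L'obstacle principal se situe dans le soin \cof que réclament la dichotomie \gui{\iv ou dans l'idéal maximal} — qui repose essentiellement sur la discrétion de $\kappa_\gA$ — et l'unicité du point 5 — qui repose sur l'unicité du zéro d'un \pol de Nagata dans l'idéal maximal, elle-même un petit lemme de type Hensel; tout le reste relève de vérifications de routine sur les modules libres de rang fini et leurs localisations.
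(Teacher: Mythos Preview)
Le présent article ne démontre pas ce théorème: il est simplement cité depuis \cite{ALP08}, sans esquisse ni indication de preuve. Il n'y a donc pas de démonstration \gui{maison} à laquelle confronter votre tentative.

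Votre preuve, quant à elle, est correcte et suit l'approche naturelle attendue: exploiter la liberté de $\gB=\aqo{\AX}{f}$ comme \Amo de base $1,x,\dots,x^{n-1}$, l'identité pivot $x\cdot w=-a_0$ avec $w\in U_f$ donnant $x\in\fm_\gA\gA_f$, puis la dichotomie \gui{\iv ou dans $\fm_\gA\gA_f$} qui règle simultanément les points 1, 2, 3. Le point 4 est standard (platitude des localisations, critère local de fidélité), et pour le point 5 votre argument d'unicité du zéro d'un \pol de Nagata dans $\fm_\gC$ via la factorisation $F(X)-F(Y)=(X-Y)\,G(X,Y)$ avec $G(\gamma,\gamma')\equiv\varphi(a_1)\pmod{\fm_\gC}$ est exactement le bon. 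Deux remarques mineures de rédaction: (i) pour $\fm_\gA\gA_f\subseteq\Rad(\gA_f)$ il est utile de dire explicitement que $\fm_\gA\gA_f$ étant un idéal, il suffit de vérifier $1+z$ \iv pour $z\in\fm_\gA\gA_f$, ce que vous faites; (ii) pour $\fm_\gA\gA_f\cap\gA=\fm_\gA$, l'argument \gui{identification des \coes} demande un petit détour par un témoin $u''\in U_f$ dans $\gB$ (puisque l'\egt a lieu dans $\gA_f$ et non dans $\gB$); une voie plus courte est d'observer que $\gA\to\gA_f$ est local, ce qui découle de la dichotomie déjà établie. Rien de tout cela n'entache la validité de votre preuve.
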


Le hensélisé \(\gA\he\) est construit comme colimite filtrée de la famille des extensions du type  \((\dots((\gA_{f_1})_{f_2})\dots)_{f_m}\) pour des \pols de Nagata \(f_j\) successifs. Cette colimite est bien définie en raison de la \prt \uvle rappelée dans le point \textsl{5}.

\begin{definition}\label{def poly spe} Soit  \((\gA,\fm_\gA)\)  un \alo \dcd. 
Un polynôme  \(t\in \AX\)  est dit {\em  spécial} s'il est de la forme 
\(X^d-X^{d-1}+t_{d-2}X^{d-2}+\cdots +t_1X+t_0\) avec les  \(~t_i\in 
\fm_\gA\).  Le zéro \(\alpha\) de \(t\) dans \(\gA\he\) codé à la Hensel par \((t,1)\) est appelé  le {\em  zéro spécial} de~\(t\). Le \pol \(g(X)=t(X+1)\)
est un \pol de Nagata.
\end{definition}

Dans \cite[Lemma 5.3]{ALP08} il est démontré que le \pol de Nagata \(f\) du \thref{DeAaAf}, ou tout autre \pol de \(f\in\AX\) vérifiant \(f(0)\in\fm_\gA\) et \(f'(0)\in\Ati\), peut être remplacé par un \pol de Nagata de la forme \(g(X)=t(X+1)\) où \(t(X)\) est un \pol spécial, avec \(\gA_f\) canoniquement isomorphe à \(\gA_g\).

\begin{lemma} \label{lemHensNormal}
Si \(\gA\) est normal, \ivmp \(\gA_f\).
Si \(\gA\) est réduit, \ivmp \(\gA_f\).  
\end{lemma}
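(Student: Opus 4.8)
Le plan est le suivant.

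\textbf{Réduction à une algèbre étale.} D'après la Définition~\ref{defi1.2}, le \mo $U_f$ est le complémentaire, dans $\gB := \gA[x] = \AX/(f)$, de l'idéal $\fn := \langle\fm_\gA, x\rangle$; comme $\ov f(0) = \ov{a_0} = 0$, cet idéal est maximal, de corps résiduel $\kappa_\gA$, et donc $\gA_f = \gB_\fn$ est un localisé de $\gB$, lui-même libre de rang $n = \deg f$ sur $\gA$ (base $1, x, \dots, x^{n-1}$). Le point-clé est que $f'(x)\notin\fn$: en effet $f'(0) = a_1\in\Ati$, donc $\ov{f'(0)}\neq 0$ dans $\kappa_\gA$. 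Ainsi $f'(x)$ est inversible dans $\gA_f$, et $\gA_f$ est un localisé de l'algèbre étale standard $D := \gB[1/f'(x)] = \AX_{f'(X)}/(f(X))$; en particulier $\gA_f$ est une colimite filtrée de $\gA$-algèbres étales. Comme un localisé, ou une colimite filtrée, d'anneaux réduits est réduit, et de même pour la normalité (l'axiome passe aux localisés et aux colimites filtrées), il suffit de traiter le cas d'une $\gA$-algèbre étale, par exemple $D$.

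\textbf{Cas réduit.} Soit $z\in D$ nilpotent; choisissons $b\in\gB$ avec $z = b/f'(x)^k$. De $f'(x)^\ell b^N = 0$ dans $\gB$ pour $\ell, N$ convenables on tire que $w := f'(x)^\ell b$ est nilpotent dans $\gB$. La multiplication par $w$ étant un endomorphisme nilpotent du $\gA$-module libre $\gB$, on a $\Tr_{\gB/\gA}(w\,x^i) = 0$ pour $0\le i\le n-1$; le vecteur des coordonnées de $w$ est donc dans le noyau de la matrice de Gram de la forme trace, de déterminant $\disc(f)$, d'où, en multipliant par l'adjointe, $\disc(f)\, w = 0$ dans $\gB$, puis $\disc(f)\, z = 0$ dans $D$. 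On conclut $z = 0$ en utilisant qu'une algèbre étale sur un anneau réduit est réduite (cf. \cite{CACM}); c'est là que l'étalitude joue vraiment.

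\textbf{Cas normal.} La même réduction ramène à voir qu'une $\gA$-algèbre étale est normale quand $\gA$ l'est. Quand $\gA$ est de plus intègre, on peut être plus explicite: $f$ se factorise sur $\Frac\gA$, les facteurs unitaires irréductibles appartiennent à $\AX$ par normalité, et il y en a exactement un, disons $f_0$, avec $f_0(0)\in\fm_\gA$, de multiplicité $1$ (sinon $X^2\di\ov f$, impossible car $\ov f$ n'est divisible qu'une fois par $X$, étant $\ov f = X\,(X^{n-1}+\cdots+\ov{a_1})$ avec $\ov{a_1}\neq 0$). En écrivant $f = f_0 h$ avec $h(0)\notin\fm_\gA$, on a $h(x)$ inversible dans $\gA_f$, donc $f_0(x) = 0$ dans $\gA_f$: ainsi $\gA_f$ est un localisé de $\AX/(f_0)$, qui est un domaine contenu dans sa clôture intégrale $\wi{\gB}$ dans $\Frac\gA[X]/(f_0)$, un domaine normal. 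Comme de plus $f_0'(x)$ est inversible dans $\gA_f$ (son terme constant $f_0'(0)$ est une unité, puisque $f'(0) = f_0'(0)h(0) + f_0(0)h'(0)\in\Ati$), l'anneau $\gA_f$ est non ramifié sur $\gA$ en $\fn$ et s'identifie au localisé de $\wi{\gB}$ en l'idéal premier au-dessus de $\fn$ de corps résiduel $\kappa_\gA$ — les deux étant des extensions étales locales de $\gA$ à extension résiduelle triviale. Donc $\gA_f$ est un localisé du domaine normal $\wi{\gB}$, d'où sa normalité; le cas normal non intègre se ramène au précédent.

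\textbf{Difficulté principale.} Le cœur de la preuve est la permanence de la réduction et de la normalité le long de l'extension étale $\gA\to\gA_f$: faits classiques sur les morphismes étales, dont les versions \covs reposent sur le calcul de forme trace ci-dessus (réduction) et sur la description par clôture intégrale (normalité), qu'il faut mener soigneusement au-dessus d'une base seulement supposée réduite, resp. normale.
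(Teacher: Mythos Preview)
The paper does not give a self-contained argument here: it simply points to \cite{Ray70} and \cite{CL2016b}. Your proposal attempts considerably more, but as written it has genuine gaps in both parts.

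\medskip
\textbf{Reduced case.} Your trace computation is on the right track, but the conclusion is circular. You show $\disc(f)\,z=0$ in $D$ and then write ``on conclut $z=0$ en utilisant qu'une algèbre étale sur un anneau réduit est réduite'': that sentence \emph{is} the statement you are trying to prove, so invoking it renders the trace argument superfluous and the proof void. The fix is available in your own setup: since $f'(x)$ divides $N_{\gB/\gA}(f'(x))=\pm\disc(f)$ in $\gB$ (Cayley--Hamilton applied to multiplication by $f'(x)$), inverting $f'(x)$ already inverts $\disc(f)$ in $D$, so $\disc(f)\,z=0$ gives $z=0$ directly. You should also make explicit why $\Tr_{\gB/\gA}(w\,x^i)=0$ for $w$ nilpotent requires $\gA$ reduced (the trace of a nilpotent endomorphism of a free module is nilpotent in $\gA$, hence zero when $\gA$ is reduced); this is the step where the hypothesis enters.

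\medskip
\textbf{Normal case.} Two problems. First, ``$f$ se factorise sur $\Frac\gA$'' is not available constructively: a discrete field need not admit factorisation of polynomials into irreducibles, so you cannot in general produce the monic irreducible factor $f_0$ you need. The argument that follows (Gauss-type descent of $f_0$ to $\AX$, identification of $\gA_f$ with a localisation of the integral closure) therefore rests on a non-constructive step, which clashes with the paper's framework. Second, the closing line ``le cas normal non intègre se ramène au précédent'' is asserted without justification; normality as defined in the paper is an axiom on principal ideals and does not automatically reduce to the integral case. Both the references the paper cites handle these points differently (via the behaviour of étale extensions with respect to integrally closed ideals, without requiring factorisation), and that is what you would need to supply.
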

%
\begin{proof}
Les \demos dans \cite{Ray70} sont \covs pour l'essentiel. Pour plus de précisions voir \cite{CL2016b}.
\end{proof}

Par contre, lorsque \(\gA\) est seulement supposé intègre, \(\gA_f\) est en général seulement réduit.

\subsubsection*{Corps valué hensélien} 

Un sous-anneau $\gV$ d'un corps discret \(\gK\) est appelé un \textsl{anneau de valuation de \(\gK\)} si pour tout \(x\in\gK\eti\), on a $x\in\gV$ ou $x^{-1}\in\gV$ de manière explicite. Alors le corps \(\gK\)  est le corps de fractions de $\gV$ et $\gV$  est local et \icl. 

Un \textsl{corps valué (discret)} est un couple $(\gK,\gV)$ où \(\gK\) est un corps discret et $\gV$ un \adv de \(\gK\).
Dans la suite,  \textsl{on suppose toujours que $\gV$ est résiduellement discret}. Cela revient à dire que l'on a un test de divisibilité entre \elts de $\gV$.
Le groupe $\Gamma=\gK\eti\!/\gV\eti$ est alors un groupe totalement ordonné discret dont les \elts $\geq 0$ sont les (classes des) \elts de $\gV\setminus\so0$.
On définit la \textsl{valuation} \hbox{$v:\gK\to\Gamma_\infty=\Gamma\cup\so\infty$} du corps valué comme 
étant égale au morphisme canonique de \(\gK\eti\)  sur $\Gamma$ que l'on prolonge à~\(\gK\)  en posant $v(0)=\infty$.
On a alors $\gV=\sotq{x\in\gK}{v(x)\geq 0}$ et $\fm_\gV=\sotq{x\in\gK}{v(x)> 0}$. 

Un anneau intègre est appelé un \textsl{domaine de valuation} si c'est un anneau de valuation de son corps de fractions. 

On peut aussi définir un anneau de valuation intègre comme un triplet $(\gV,\Gamma,v)$  où \(\gV\)  est un anneau intègre, $\Gamma$ est un groupe totalement ordonné discret et $v:\gV\to \Gamma_\infty$ est une \textsl{valuation}, \cad une application vérifiant
\begin{itemize}
\item $v(x)\geq 0$ pour tout $x\in\gV$,
\item  $v(ab)=v(a)+v(b)$,
\item  $v(a+b)\geq \min(v(a),v(b))$, et
\item $x=0 \Leftrightarrow v(x)=\infty$. 
\end{itemize}
Si \(\gK\) est le corps de fractions de $\gV$, la valuation s'étend de manière unique à \(\gK\) et le groupe $\gK\eti\!/\gV\eti$ s'identifie via $v$  à un sous-groupe de $\Gamma$.

Une \emph{extension} du corps valué $(\gK,\gV)$ est un corps valué $(\gL,\gW)$ où $\gK\subseteq \gL$ et \hbox{$\gV=\gK\cap\gW$}\footnote{On devrait plutôt considérer un morphisme $\varphi:\gK\to\gL$ avec $\gV=\gK\cap\varphi^{-1}(\gW)$. On est alors dans la situation voulue à un isomorphisme unique près.}. Le groupe $\gK\eti\!/\gV\eti$ s'identifie alors à un sous-groupe de $\gL\eti\!/\gW\eti$ et le corps résiduel $\gV/\fm_\gV$ à un sous-corps de $\gW/\fm_\gW$. Et la valuation $v_\gK$ se prolonge en la valuation~$v_\gL$. 

Un corps valué $(\gK,\gV)$ est dit \textsl{hensélien} si $\gV$ est hensélien en tant qu'anneau local. 

Par \dfn, un \textsl{hensélisé} de \((\gK,\gV)\) est donné par une extension  \((\gK\He,\gV\He)\) qui est un corps valué hensélien tel que pour toute autre extension hensélienne \((\gL,\gW)\), il existe un unique morphisme de \((\gK\He,\gV\He)\) vers \((\gL,\gW)\). À priori, s'il existe, le hensélisé est unique à isomorphisme unique près. En outre, vu la \prt universelle du hensélisé d'un anneau local résiduellement discret, on a un unique morphisme local du hensélisé $\gV\he$ vers le hensélisé $\gV\He$. 

L'article \cite{KL00}  donne une \demo \cov de l'existence du hensélisé d'un corps valué. 

Celui-ci est construit par étapes. Une étape \elr consiste à ajouter un zéro à la Hensel de manière optimale (\cad que l'extension doit satisfaire la \prt universelle adéquate).

Avant même d'avoir construit \(\gK\He\), si \(\beta\) est le zéro à la Hensel d'un \pol de Nagata~\(f\), on peut construire \(\gK[\beta]\) en utilisant la méthode généralisée du \pgn. Si l'on note \(\gK[x]=\aqo{\KX}f\)
il s'agit en particulier de prolonger la valuation~$v$ à \(\gK[x]\) (les \elts $y$ pour lesquels \(v(y)=\infty\) sont annulés de force). En particulier,~\(\gK[\beta]\)  est un \cdi quotient de \(\gK[x]\), mais en l'absence d'\algo de \fcn des \pols sur \(\gK\) on ne sait pas déterminer le degré de \(\gK[\beta]\) sur \(\gK\). Cette situation est analogue à celle de la construction d'une clôture réelle d'un corps ordonné discret\footnote{Par exemple on peut calculer de manière sûre dans $\gK[\sqrt 2]$ même si on ne sait pas si $2$ admet une racine carrée dans le corps ordonné discret $\gK$.}.

\newcommand{\eal}{^{\mathrm{alg}}}
Dans la proposition suivante, on utilise une \textsl{clôture \agq hypothétique}
$\gK\eal$ de~\(\gK\), et une \textsl{extension hypothétique}  \((\gK\eal,\gV\eal)\)
du corps valué \((\gK,\gV)\). Cela n'affecte pas le caractère constructif de la construction de \((\gK\He,\gV\He)\), comme expliqué dans~\cite{KL00}. Précisément, on peut considérer \((\gK\eal,\gV\eal)\) comme une \textsl{\sad} qui ne s'effondre pas (voir \cite{CLR2001}), ce qui autorise à \gui{faire comme si} \((\gK\eal,\gV\eal)\) était un structure \agq usuelle pour conduire les calculs qui aboutissent à la construction du hensélisé.

\smallskip 
On dit qu'un \elt \(\alpha\in\gK\eal\) possède une \textsl{description immédiate} s'il s'écrit sous la forme \(a(1+\mu)\) où \(a\in\gK\) et \(\mu\in\fm_{\gV\eal}\).

\begin{proposition}[Lemme de Hensel, version polygone de Newton, \cite{KL00}] \label{lem NPH} 
 Soit $(\gK,\gV)$ un corps valué et $p\in \gV [X]$ un polynôme: 
$p(X)=\sum_{i=0,\ldots,d} p_iX^i$. Supposons que le \pgn de \(p\) 
admette une \gui{pente isolée} de $(k,v(p_k))$ à $(k+1,v(p_{k+1}))$. 
\begin{enumerate}
\item Alors l'unique zéro  $\alpha$ de \(p(X)\) dans \(\gK\eal\)  tel que 
$v(\alpha)=v(p_k)-v(p_{k+1})$ admet la \dimm suivante: 
\[\alpha = - { p_k  \over  p_{k+1} }\,(1+\mu) \quad \hbox{ avec } \; 
\mu \in \fm_{\gV\eal}
\]
\item En outre $\nu=1+\mu$  est un zéro à la Hensel codé par 
$(q,1)$ où  $q\in  \gV [Y]$ est donné par  
\[q(Y)={p_{k+1}^k \over p_k^{k+1}}\,p\Bigl(-{p_k\over p_{k+1}}Y\Bigr)\]
($\nu$ est l'unique zéro de $q$ dans $(\gV\eal)\eti$).
\item Si on pose \(\sum_{i=0,\ldots,d} r_iX^i:=r(X):=q(1+X)\), on a 
\(v(r_0)>0\) et  \(v(r_1)=0\). \\
Si $r_0=0$ alors $\mu=0$. Si $r_0\neq 0$ on pose \(s(X):=(1/r_0)r(-r_0X/r_1)\) et \(t(X)=X^ds(1/X)\) et on obtient que \(t(X)\) est un polynôme spécial dans \(\gV[X]\), de sorte que tous les zéros de $t(X)$, sauf le zéro spécial, qui correspond au zéro $\alpha$ de \(p(X)\), sont dans $\fm_{\gV^\mathrm{alg}}$. 
En résumé, un zéro $\alpha$ correspondant à une pente isolée d'un \pgn peut toujours être explicité soit comme un élément de \(\gK\), soit sous une forme 
\((a\beta+b)/(c\beta+d)\) où $\beta$  est le zéro spécial d'un 
polynôme spécial $t$, avec $a,b,c,d\in \gV$, \((c\beta+d)\neq 0$ et $(ad-
bc)\neq 0$.
\end{enumerate}
\end{proposition}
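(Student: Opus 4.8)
Le plan est de ramener l'énoncé, via un unique changement de variable affine, au lemme de Hensel usuel dans le corps valué $(\gK\eal,\gV\eal)$ — qui est hensélien puisque $\gK\eal$ est algébriquement clos — puis d'enchaîner deux normalisations affines supplémentaires pour faire apparaître un \pol spécial. Le changement initial est celui qui définit $q$ : posons
$$q(Y)=\frac{p_{k+1}^k}{p_k^{k+1}}\;p\!\left(-\frac{p_k}{p_{k+1}}\,Y\right)=\sum_{i}q_iY^i,\qquad \nu=-\frac{p_{k+1}}{p_k}\,\alpha .$$
La première vérification clé est que $q\in\gV[Y]$ : chaque point $(i,v(p_i))$ étant sur ou au-dessus de la droite portant l'arête, on obtient $v(q_i)\ge 0$ pour tout $i$, avec égalité exactement pour $i=k$ et $i=k+1$ ; de plus un calcul immédiat donne $q_{k+1}=-q_k$ avec $q_k\in\gV\eti$. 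L'hypothèse de pente isolée signifie précisément que le point $(i,v(p_i))$ est strictement au-dessus de cette droite pour tout $i\ne k,k+1$, donc que $q_i\in\fm_\gV$ pour ces $i$ ; modulo $\fm_\gV$ on a alors $\ov q(Y)=\ov q_k\,Y^k(1-Y)$ avec $\ov q_k\nzr$.

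On traite les points \emph{1} et \emph{2} ensemble. Le couple $(q,1)$ est un code de Hensel : $q(1)=\sum_i q_i\in\fm_\gV$ (les termes d'indices $k$ et $k+1$ se compensent, les autres sont dans $\fm_\gV$) et $\ov{q'(1)}=\sum_i i\,\ov q_i=-\ov q_k\nzr$. Il a donc un unique zéro de Hensel $\nu$ dans $\gV\eal$, caractérisé par $\nu\equiv 1$ modulo $\fm_{\gV\eal}$ ; et comme, vu la forme de $\ov q$, tout zéro de $q$ de valuation nulle se réduit en $1$, ce $\nu$ est aussi l'unique zéro de $q$ dans $(\gV\eal)\eti$. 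Posant $\alpha=-\frac{p_k}{p_{k+1}}\,\nu$, on obtient un zéro de $p$ avec $v(\alpha)=v(p_k)-v(p_{k+1})$, et réciproquement tout zéro de $p$ de cette valuation fournit un zéro de $q$ de valuation nulle, donc égal à $\nu$. D'où à la fois l'existence et l'unicité de $\alpha$, sa \dimm $\alpha=-\frac{p_k}{p_{k+1}}(1+\mu)$ avec $\mu=\nu-1\in\fm_{\gV\eal}$ (point \emph{1}), et le fait que $\nu$ soit le zéro de Hensel codé par $(q,1)$ (point \emph{2}).

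Pour le point \emph{3} : avec $r(X)=q(1+X)=\sum_i r_iX^i\in\gV[X]$, on a $r_0=q(1)\in\fm_\gV$ (donc $v(r_0)>0$) et $r_1=q'(1)\in\gV\eti$ (donc $v(r_1)=0$), ce qui est immédiat. Si $r_0=0$, alors $1$ est un zéro de $q$ dans $(\gV\eal)\eti$, donc $\nu=1$ par unicité, $\mu=0$ et $\alpha=-p_k/p_{k+1}\in\gK$. Si $r_0\nzr$, le changement $s(X)=\frac1{r_0}\,r\!\left(-\frac{r_0}{r_1}X\right)$ normalise en $s_0=1$, $s_1=-1$ et $s_i\in\fm_\gV$ pour $i\ge 2$ (car $v(r_i)\ge 0$ et $v(r_0)>0$), de sorte que $t(X)=X^ds(1/X)=X^d-X^{d-1}+s_2X^{d-2}+\cdots+s_d$ est bien un \pol spécial de $\gV[X]$, dont les zéros sont les inverses de ceux de $s$. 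En déroulant la chaîne de substitutions : $\mu=\nu-1$ est l'unique zéro de $r$ dans $\fm_{\gV\eal}$, et l'arête isolée de $(0,v(r_0))$ à $(1,0)$ du \pgn de $r$ force $v(\mu)=v(r_0)$ ; le zéro correspondant $\delta=-\frac{r_1}{r_0}\mu$ de $s$ vérifie donc $v(\delta)=0$, puis $\ov\delta=1$ puisque $\ov s(X)=1-X$ ; ainsi $\beta:=1/\delta$ est l'unique zéro de $t$ congru à $1$ modulo $\fm_{\gV\eal}$, c'est-à-dire le zéro spécial de $t$, les autres zéros de $t$ — inverses des zéros de $s$, de valuation $<0$ — étant dans $\fm_{\gV\eal}$. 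En remontant, $\delta=1/\beta$, $\mu=-r_0/(r_1\beta)$, $\nu=1-r_0/(r_1\beta)$, et
$$\alpha=-\frac{p_k}{p_{k+1}}\,\nu=\frac{(-p_kr_1)\,\beta+p_kr_0}{(p_{k+1}r_1)\,\beta},$$
qui est de la forme voulue $(a\beta+b)/(c\beta+d)$ avec $a=-p_kr_1$, $b=p_kr_0$, $c=p_{k+1}r_1$, $d=0$ dans $\gV$ ; de plus $c\beta+d=p_{k+1}r_1\beta\nzr$ (produit de trois \elts non nuls de l'anneau intègre $\gV\eal$) et $ad-bc=-p_kp_{k+1}r_0r_1\nzr$.

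La difficulté principale n'est pas calculatoire : elle porte sur la justification \cov des ingrédients classiques mobilisés — le fait que $\gV\eal$ se comporte comme un anneau de valuation hensélien, ce qu'autorise l'emploi, comme dans \cite{CLR2001,KL00}, d'une \sad qui ne s'effondre pas ; la traduction correcte de \gui{pente isolée} en inégalités de valuation ; et surtout le contrôle soigneux assurant qu'à chaque étape les coefficients restent dans $\gV$ (et non seulement dans $\gK$) et que l'on suit bien \gui{le même zéro} le long des trois changements de variable affines successifs.
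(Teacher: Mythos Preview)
The paper does not give its own proof of this proposition: it is quoted from \cite{KL00} and used as a black box. Your argument is correct and is essentially the standard one --- a single affine rescaling turns the isolated slope into a simple Hensel situation at $Y=1$, and two further affine normalisations produce the special polynomial --- which is also the line of \cite{KL00}; your verification that $q\in\gV[Y]$, that $(q,1)$ is a Hensel code, and that the chain of substitutions tracks the distinguished zero throughout is careful and complete.
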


\begin{corollary} \label{corlem NPH}
Pour tout 
zéro à la Hensel $\alpha$, on a  \(\gK[\alpha]=\gK[\beta]\) où   $\beta$ 
est le zéro spécial d'un polynôme spécial. 
\end{corollary}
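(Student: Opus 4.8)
Le plan est de présenter un zéro à la Hensel $\alpha$ arbitraire comme le zéro attaché à une pente isolée d'un \pgn, d'appliquer la Proposition~\ref{lem NPH}, puis d'inverser l'homographie qu'elle fournit. On commencera par se ramener à un code de Hensel de forme commode : si $(f,a)$ est un code de Hensel pour $\alpha$ (donc $f\in\gV[X]$, $a\in\gV$, $f(a)\in\fm_\gV$, $f'(a)\in\gV\eti$), on pose $g(X):=f(X+a)$ ; alors $\alpha':=\alpha-a$ est le zéro de $g$ codé par $(g,0)$, il appartient à $\fm_{\gV\eal}$, et $\gK[\alpha]=\gK[\alpha']$ car $a\in\gK$. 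Si $f(a)=0$, alors $\alpha'=0$ et $\gK[\alpha]=\gK$ (cas traité plus loin) ; on suppose donc désormais $g\in\gV[X]$ avec $0<v(g_0)<\infty$ et $v(g_1)=0$, et $\alpha'$ l'unique zéro de $g$ dans $\fm_{\gV\eal}$.

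Ensuite on vérifiera que $\alpha'$ est le zéro d'une pente isolée, de sorte que la Proposition~\ref{lem NPH} s'applique avec $p=g$ et $k=0$. Comme $g\in\gV[X]$, on a $v(g_i)\geq 0$ pour tout $i$ ; joint à $v(g_0)>0$, ceci entraîne que $(1,0)$ est un sommet du \pgn inférieur de $g$ et que sa première arête joint $(0,v(g_0))$ à $(1,0)$, ce qui est bien une pente isolée au sens de la Proposition~\ref{lem NPH}. De plus, en écrivant $g(\alpha')=0$ sous la forme $g_0=-\alpha'(g_1+g_2\alpha'+\cdots)$ et en remarquant que $g_1+g_2\alpha'+\cdots\in(\gV\eal)\eti$ (car $g_1\in\gV\eti$ et les autres termes appartiennent à $\fm_{\gV\eal}$), on obtient $v(\alpha')=v(g_0)$ ; l'arête considérée étant de longueur horizontale $1$, $\alpha'$ est exactement le zéro fourni par le point~1 de la Proposition~\ref{lem NPH}. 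Le point~3 de cette proposition donne alors l'alternative : ou bien $\alpha'\in\gK$, ou bien $\alpha'=(a'\beta+b')/(c'\beta+d')$ avec $a',b',c',d'\in\gV$, $c'\beta+d'\neq 0$, $a'd'-b'c'\neq 0$, et $\beta$ le zéro spécial d'un polynôme spécial.

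Enfin on conclura dans chacun des deux cas. Si $\alpha'\in\gK$, alors $\gK[\alpha]=\gK[\alpha']=\gK=\gK[1]$ et $1$ est le zéro spécial du polynôme spécial $X^2-X$ (codé à la Hensel par $(X^2-X,1)$), d'où l'égalité voulue avec $\beta=1$. Sinon, $c'\beta+d'\neq 0$ est inversible dans le \cdi $\gK[\beta]$, donc $\alpha'=(a'\beta+b')/(c'\beta+d')\in\gK[\beta]$ et $\gK[\alpha]\subseteq\gK[\beta]$ ; réciproquement, $a'd'-b'c'\neq 0$ assure que l'homographie $z\mapsto(a'z+b')/(c'z+d')$ a pour inverse $z\mapsto(d'z-b')/(a'-c'z)$, d'où $\beta=(d'\alpha'-b')/(a'-c'\alpha')$, et l'identité $a'-c'\alpha'=(a'd'-b'c')/(c'\beta+d')\neq 0$ montre que $a'-c'\alpha'$ est un élément non nul, donc inversible, du \cdi $\gK[\alpha]$, si bien que $\beta\in\gK[\alpha]$ et $\gK[\beta]\subseteq\gK[\alpha]$. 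Les seuls points demandant un peu de soin seront la vérification qu'un code de Hensel relève bien de l'hypothèse de pente isolée de la Proposition~\ref{lem NPH} et le fait que $\gK[\alpha]$ et $\gK[\beta]$ sont des corps discrets (où tout élément non nul est inversible), ce qui ressortit à la construction de \cite{KL00} ; ceci admis, le corolaire n'est que cette proposition complétée par l'inversion d'une homographie.
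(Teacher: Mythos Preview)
Your proof is correct and follows exactly the same route as the paper: translate the Hensel code $(r,a)$ to the Nagata polynomial $p(X)=r(X+a)$, observe that the resulting zero sits on the isolated slope from $(0,v(p_0))$ to $(1,v(p_1))$, and invoke Proposition~\ref{lem NPH}. The paper's argument is a single sentence that stops at ``corresponds to an isolated slope'', leaving both the verification of the slope condition and the passage from point~3 of the proposition (the homographic expression of $\alpha$ in terms of $\beta$) to the equality $\gK[\alpha]=\gK[\beta]$ entirely implicit; you have written out precisely those details, including the inversion of the homography and the trivial case $\alpha\in\gK$.
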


En effet, un zéro à la Hensel $\alpha$ codé par $(r,a)\in(\VX,\gV)$ 
correspond à une pente isolée, de  \((0,v(p_0))\)  à  \((1,v(p_1))\), 
du \pgn du \pol de Nagata $p(X)=r(X+a)$.
Le fait que la valuation de \(\gK\) s'étend de manière unique en une valuation de~\(\gK[\alpha]\), et que le groupe de valeurs et le corps résiduel ne changent pas, est \egmt démontré dans l'article.

Précisément, vu la proposition \ref{lem NPH}, la 
possibilité de calculer explicitement dans $(\gK\He,\gV\He)$ est ramenée 
à la proposition suivante.

\begin{proposition} 
\label{prop calc Hens} Soit \(t(X)\) un polynôme spécial, \(\beta\) son 
zéro spécial dans $\gK\He$, et~\hbox{\(q(X)\in \gK[X]\)} un \pol arbitraire, alors une \dimm de 
$q(\beta)$ peut être obtenue par des calculs uniformes dans $(\gK,\gV)$.
\end{proposition}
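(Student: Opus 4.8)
Le plan est de mettre \(q(\beta)\in\gK[\beta]\) sous la forme normale \(a(1+\mu)\) par réductions successives, le cœur calculatoire étant l'analyse de \pgns. On commence par deux réductions: la division euclidienne de \(q\) par le \pol unitaire \(t\) dans \(\gK[X]\) remplace \(q(\beta)\) par \(r(\beta)\) avec \(\deg r<d\), puis la mise en facteur d'un \(a_0\in\gK\eti\) tel que \(v(a_0)=\min_i v(q_i)\) ramène au cas \(q\in\gV[X]\) avec \(\ov q\neq 0\) dans \(\kappa_\gV[X]\) (on récupère la \dimm cherchée en multipliant le résultat par \(a_0\)). Ensuite on décide si \(q(\beta)=0\): en calculant \(g=\pgcd_{\gK[X]}(t,q)\), normalisé pour que \(g\in\gV[X]\) avec \(\ov g\neq 0\), on a \(q(\beta)=0\) \ssi \((X-1)\di\ov g\) dans \(\kappa_\gV[X]\). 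En effet \(q(\beta)=0\) équivaut à ce que le \polmin de \(\beta\) divise \(g\), donc à \(g(\beta)=0\); et comme \(\ov g\) divise \(\ov t=X^{d-1}(X-1)\), dont \(1\) est racine simple, l'anneau hensélien \(\gV\He\) possède un unique zéro de \(g\) congru à \(1\), nécessairement \(\beta\) dès que \((X-1)\di\ov g\), tandis que réciproquement \(\ov{g(\beta)}=\ov g(1)\). Si \(q(\beta)=0\) c'est terminé; on suppose donc désormais \(q(\beta)\neq 0\).

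Le cas facile est celui où \(q(1)\in\gV\eti\), ce qui se teste puisque \(\gV\) est \dcd: alors \(\beta\equiv 1\) donne \(q(\beta)\equiv q(1)\pmod{\fm_{\gV\He}}\), d'où la \dimm \(q(\beta)=q(1)\,(1+\mu)\), où \(\mu\in\fm_{\gV\He}\) est la valeur en \(\beta\) du \pol \(q(X)/q(1)-1\); le tout se calcule dans \((\gK,\gV)\).

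Supposons \(q(1)\in\fm_\gV\) mais \(q(0)\in\gV\eti\). On introduit la résultante \(P(Y)=\Res_X\bigl(t(X),Y-q(X)\bigr)\in\gV[Y]\), unitaire de degré \(d\), dont les zéros dans une clôture \agq hypothétique \(\gK\eal\) sont les \(q(\beta_j)\), \(\beta_j\) parcourant les \(d\) zéros de \(t\). D'après la proposition~\ref{lem NPH} appliquée au \pol de Nagata \(t(X+1)\), parmi ces zéros le zéro spécial \(\beta\) est \(\equiv 1\) et les \(d-1\) autres sont dans \(\fm_{\gV\eal}\); donc \(q(\beta)\equiv q(1)\in\fm_\gV\) alors que \(q(\beta_j)\equiv q(0)\in\gV\eti\) pour \(j\) non spécial, de sorte que \(\ov P(Y)=Y\,(Y-\ov{q(0)})^{d-1}\) avec \(\ov{q(0)}\neq 0\). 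Le \pgn de \(P\) admet alors une pente isolée de \((0,v(P_0))\) à \((1,0)\), de zéro associé \(q(\beta)\); le point~\textsl{1} de la proposition~\ref{lem NPH} fournit \(q(\beta)=-\tfrac{P_0}{P_1}\,(1+\mu)\) avec \(\mu\in\fm_{\gV\eal}\), donc (comme \(q(\beta)\in\gK\He\)) une \dimm de \(q(\beta)\) dont le terme \(a=-P_0/P_1\) et le reste \(\mu\), valeur en \(\beta\) de \(-\tfrac{P_1}{P_0}q(X)-1\), se calculent dans \((\gK,\gV)\) (ici \(P_0=\pm\Res_X(t,q)\)).

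Reste le cas totalement dégénéré \(q(1),q(0)\in\fm_\gV\): tout revient à calculer \(v(q(\beta))\). On pose \(\delta=\beta-1\); comme \(t(X+1)\) est un \pol de Nagata, la proposition~\ref{lem NPH} donne \(v(\delta)=v(t(1))\) et la \dimm \(\delta=-\tfrac{t(1)}{t'(1)}(1+\nu)\), \(\nu\in\fm_{\gV\He}\). Écrivant \(q(1+Y)=\sum_k b_k Y^k\) avec \(b_k\in\gV\), on a \(q(\beta)=\sum_k b_k\delta^k\) avec \(v(b_k\delta^k)=v(b_k)+k\,v(t(1))\) connu. Si le minimum \(\eta\) de ces valeurs est atteint en un unique \(k_0\), alors \(v(q(\beta))=\eta\) et \(q(\beta)=b_{k_0}\bigl(-\tfrac{t(1)}{t'(1)}\bigr)^{k_0}(1+\mu)\) est une \dimm calculée dans \((\gK,\gV)\) (avec \(\mu\) valeur en \(\beta\) de \(a^{-1}q(X)-1\), \(a=b_{k_0}(-t(1)/t'(1))^{k_0}\)). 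S'il est atteint en plusieurs indices, on forme \(s=\sum_{k\text{ tel que }v(b_k)+kv(t(1))=\eta}b_k(-t(1)/t'(1))^k\in\gK\); si \(v(s)=\eta\) on conclut comme ci-dessus avec \(a=s\), sinon les termes dominants se compensent, \(v(q(\beta))>\eta\), et il faut descendre d'un cran en développant à un ordre supérieur en \(\nu\) — ce qui amorce une itération. La difficulté principale est la terminaison de cette itération, équivalente au fait que la valeur \(v(q(\beta))\), dans le groupe des valeurs \(\Gamma\) qui ne change pas, est atteinte en un nombre fini d'étapes; elle repose sur la finitude de \(\gK[\beta]\) sur \(\gK\) (absence de défaut, inhérente à la construction de \cite{KL00}). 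Une variante serait de lire \(v(q(\beta))\) sur le \pgn de \(P(Y)=\Res_X(t(X),Y-q(X))\) une fois repérée l'arête attachée au zéro spécial \(\beta\), toute la subtilité étant alors de distinguer le zéro \(q(\beta)\) de \(P\) des zéros \(q(\beta_j)\) associés aux zéros non spéciaux de \(t\) lorsque leurs résidus coïncident. Le reste (sommes et produits d'\elts à \dimm connue en sont encore) est de la vérification de routine.
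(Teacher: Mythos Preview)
There is a genuine gap in your degenerate case \(q(1),q(0)\in\fm_\gV\). You set up a Taylor expansion in \(\delta=\beta-1\), observe that when the minimum of the \(v(b_k\delta^k)\) is attained several times the leading terms may cancel, and then write that \gui{la difficulté principale est la terminaison de cette itération}, invoking \gui{absence de défaut, inhérente à la construction de \cite{KL00}}. But this proposition \emph{is} one of the results of \cite{KL00} that establishes the good behaviour of \((\gK[\beta],v)\); you cannot appeal to its consequences to close its own proof. Your fallback \gui{variante} via \(P(Y)=\Res_X(t,Y-q)\) is left in the same state: you say that \gui{toute la subtilité} is to distinguish \(q(\beta)\) from the other \(q(\beta_j)\) when their residues coincide, and stop there. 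So precisely in the case where the Newton polygon of \(P\) does not isolate \(q(\beta)\), you have no argument.

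The paper does not reprove the proposition in full (it is quoted from \cite{KL00}), but the decisive device is recalled at the end of the proof of \thref{thFeTe1}, and it is exactly what you are missing. One computes the \polcar \(g\) of \(y=q(x)\) in \(\gK[x]=\aqo{\KX}{f}\); its \pgn gives, in disorder, the multiset \(\{v(q(\xi_i))\}_i\). To single out the value attached to \(\xi_n=\beta\), compute also the \polcar of \(xy=xq(x)\): since \(v(\xi_i)=0\) for \(i<n\) while \(v(\beta)=w_n>0\), the second multiset differs from the first in exactly one entry, shifted by \(w_n\). Comparing the two \pgns therefore identifies \(v(q(\beta))\) in a single step, with no iteration and no termination problem; the \dimm then follows. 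This \gui{multiply by \(x\) and compare} trick replaces your whole case split and is what makes the computation uniform.
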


En conséquence la structure de  \(\gK[\beta]\)  comme corps valué est complètement déterminée, à un isomorphisme unique près.
Cette unicité forte permet donc de construire le hensélisé~\(\gK\He\) comme colimite filtrée des extensions \(\gK[\beta_1,\dots,\beta_m]\) construites en ajoutant des zéros spéciaux de \pols spéciaux successifs.

\section[Comparaison des premiers étages des deux hensélisés]{Comparaison des premiers étages de construction des deux hensélisés}\label{sec2}

On considère dans cette section la situation suivante.

\smallskip 
\begin{itemize}
\item \(\gA\) est intègre  local \dcd avec \(\gK\) pour corps de fractions, \(\gK\) est un \cdi;
\item \(\gV\) est un \adv de \(\gK\), il est \dcd, il contient \(\gA\) et il domine \(\gA\): \(\Ati=\gA\cap\gV\eti\) et \(\fm_\gA=\gA\cap\fm_\gV\);   
\item  \((\gK\eal,\gV\eal)\) est une extension hypothétique du corps valué \((\gK,\gV)\) avec \(\gK\eal\) une clôture \agq hypothétique de \(\gK\), et l'on considère la sous-extension  \((\gK\He,\gV\He)\) obtenue en ajoutant des zéros à la Hensel successifs; 
\item  \(t(X)\in\AX\) est un \pol spécial;%
\item   \(f(X):=t(X+1)\) est un \pol de Nagata; 
\item  \(\alpha\) est le zéro de \(f\) dans \(\fm_{\gA_f}\);
\item  \(\beta\) est l'unique zéro de \(f(X)\) dans \(\fm_{\gV\He}\), les autres zéros de \(f(X)\) dans \(\gK\eal\) sont dans \(-1+\fm_{\gV\eal}\subseteq (\gV\eal)\eti\);
\item  \(\gL:=\gK[\beta]\) et \((\gL,\gW)\) est l'extension de \((\gK,\gV)\) construite dans la proposition~\ref{lem NPH},
on~a~\(\gW=\gL\cap \gV\eal\);
\item  \(\theta_f:\gA_f\to\gW\) est le morphisme local qui factorise le morphisme \(\gA\to\gV\to\gW\), \hbox{on a \(\theta_f(\alpha)=\beta\)}. 
\end{itemize}

\smallskip On note que les structures \(\gA_f\), \(\gA\he\), \(\gW\) et \(\gV\He\) sont effectivement construites, tandis que les structures hypothétiques qui permettent de faciliter les \demos sont des \sads qui ne s'effondrent pas (voir  par exemple dans \cite{CLR2001} les 
théorèmes 1.1 et~4.3 et la remarque 4.6).

Dans le diagramme ci-dessous, tous les objets sont construits, les $\jmath$ sont des morphismes injectifs,
et $\pi$ est surjectif. On a aussi $\gK[x]=\aqo\KX f$,  $\gA[x]=\aqo\AX f$, \(\alpha=\varphi(x)\) \hbox{et \(\beta=\pi(x)\)}. On rappelle que \(\gA_f=U_f^{-1}\gA[x]\)

\[
\xymatrix @R=2em @C=3em{
&\gA \ar[ddd]_{\displaystyle \jmath_1} \ar[dl]_{\displaystyle \jmath_\gA} \ar[r]^{\displaystyle \jmath_4} & \gV\ar[r]^{\displaystyle \jmath_5}\ar[ddd]_{\displaystyle \jmath_2}& \gK\ar[ddd]_{\displaystyle \jmath_3}\ar[dr] ^{\displaystyle \jmath_\gK}
\\
\gA[x] \ar[rrrr]^{~~~~~~~~\displaystyle \jmath[x]} \ar[ddr]_{\displaystyle \varphi}&&&&\gK[x]\ar[ddl]^{\displaystyle \pi}
\\~
\\
&\gA_f\ar[r]_{\displaystyle \theta_f}&\gW\ar[r]_{\displaystyle \jmath_6}&\gK[\beta]
}
\]

Dans la suite, nous regardons \(\jmath_6\) comme un morphisme d'inclusion
et nous écrivons \(\jmath_6(\varepsilon)=\varepsilon\) pour \(\varepsilon\in\gW\).

Voici une version finitiste du \tho 1 de de Felipe et Teissier cité dans l'introduction. Rappelons qu'un \idep est dit \textsl{minimal} si l'anneau local correspondant est \zed. Dans le cas d'un anneau non trivial réduit, cela signifie que le localisé est un corps discret. 

\begin{theorem} \label{thFeTe1}
Le noyau de \(\theta_f\) est un \idemi de \(\gA_f\). 
\end{theorem}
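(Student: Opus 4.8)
Le plan est de montrer que l'anneau local $(\gA_f)_\fp$, où l'on pose $\fp:=\Ker\theta_f$, est un \cdi. C'est plus fort que l'énoncé visé, car un \idep est minimal dès que l'anneau local correspondant est \zed; au passage cela redonne que $\fp$ est bien un \idep.

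D'abord, deux observations faciles. Comme $\jmath_6$ est injectif, $\fp$ est aussi le noyau de $\jmath_6\circ\theta_f:\gA_f\to\gL$, où $\gL=\gK[\beta]$; or $\gL$ est un \cdi, donc $\gA_f/\fp$ s'identifie à un sous-anneau non trivial de $\gL$, ce qui montre déjà que $\fp$ est un \idep. Ensuite, la restriction de $\jmath_6\circ\theta_f$ à l'image de $\gA$ dans $\gA_f$ est la composée des inclusions $\gA\subseteq\gV\subseteq\gW\subseteq\gL$, donc elle est injective, et par suite $\fp\cap\gA=0$: l'\idep $\fp$ est au-dessus de l'idéal nul de $\gA$.

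Ensuite, je ramènerais la minimalité de $\fp$ à une propriété du \Kev de dimension $\deg f$ qu'est $\gK[x]:=\aqo\KX f$. Puisque $\fp\cap\gA=0$, tout \elt de $\gA\setminus\so 0$ devient inversible dans $(\gA_f)_\fp$; donc $(\gA_f)_\fp$ est un localisé de $(\gA\setminus\so 0)^{-1}\gA_f=\gK\te_\gA\gA_f$, qui est lui-même un localisé de $\gK\te_\gA\gA[x]=\gK[x]$ (la localisation commutant au changement de base $\gK\te_\gA{-}$). J'invoquerais alors le fait élémentaire et \cof qu'une \klg libre de rang fini est \zede: pour $\xi\in\gK[x]$, une \rdl entre $1,\xi,\dots,\xi^{\deg f}$ fournit un \pol non nul $p\in\KX$ avec $p(\xi)=0$, et en écrivant $p(X)=X^k\wi p(X)$ avec $\wi p(0)$ inversible dans $\gK$ (possible, $\gK$ étant discret), on obtient $\xi^k\wi p(\xi)=0$, d'où $\xi^k(1-\xi z)=0$ pour un $z$ convenable. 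Comme tout localisé d'un anneau \zed est \zed, il vient que $(\gA_f)_\fp$ est \zed, ce qui suffit déjà pour conclure que $\fp$ est un \idemi.

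Pour l'énoncé renforcé, il reste à observer que $\gA$ est réduit (étant intègre), donc $\gA_f$ aussi d'après le lemme~\ref{lemHensNormal}, et il en va alors de même de ses localisés $\gK\te_\gA\gA_f$ puis $(\gA_f)_\fp$; or un anneau non trivial, local (donc connexe), \zed et réduit est un \cdi. Il n'y a pas ici d'obstacle sérieux une fois la bonne réduction vue; la seule chose qu'il faudra se garder de vouloir faire est de décomposer $\gK[x]$, ou $\gK\te_\gA\gA_f$, en produit fini de \cdis, ce qui n'est pas possible \cot en général (cela reviendrait à factoriser $f$ sur $\gK$, ce pour quoi il n'y a pas d'\algo). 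La subtilité est précisément qu'on n'en a pas besoin: être \zed et être réduit suffisent, et ces deux propriétés survivent aux deux localisations successives qui mènent de $\gK[x]$ à $(\gA_f)_\fp$.
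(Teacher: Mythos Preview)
Your proof is correct, and it takes a genuinely different route from the paper's.

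The paper argues element by element: for $\gamma=q(\alpha)\in\gA_f$ it computes the \polcar $g(T)=\chi_{q(x)}(T)\in\gA[T]$, factors $g(T)=T^kh(T)$ with $h(0)\neq 0$ (using that $\gA$ is intègre, hence has a zero test), and then does a case analysis on whether $\delta=q(\beta)$ or $h(\delta)$ vanishes in $\gK[\beta]$, producing in each case either $\gamma\in S_f$ or an explicit annihilator $\zeta\in S_f$. It also spells out, via \pgns, the algorithm that makes the test $\delta=0$ in $\gK[\beta]$ effective.

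You instead make the structural observation $\fp\cap\gA=0$ and use it to exhibit $(\gA_f)_\fp$ as a localisation of the finite $\gK$-algebra $\gK[x]$, which is \zed for the elementary reason you give; reducedness then comes from the lemme~\ref{lemHensNormal} and passes to localisations. This is cleaner and avoids the explicit case split. The price is twofold: first, you rely on the fact (established earlier in the paper, proposition~\ref{prop calc Hens}) that $\gK[\beta]$ is a \cdi, whereas the paper's proof re-derives the zero test as part of the argument; second, and more importantly for the paper's architecture, your reduction hinges on $\fp\cap\gA=0$, i.e.\ on $\gA$ being intègre. The paper's hands-on approach, working with the \polcar over~$\gA$ itself, is what generalises without change of strategy to the cases of sections~3 and~4 where $\gA$ is only reduced (or not even that) and $\Ker\theta$ is a nonzero minimal prime.
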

%
\begin{proof}
Le noyau  \(\fp_f:=\Ker\theta_f\) est un \idep détachable   parce que~\(\gW\) est un anneau intègre, donc qui possède un test à $0$. 
Notons \(S_f=\gA_f\!\setminus\fp_f\).
Comme $\gA_f$ est réduit, l'\idep \(\fp_f\) est minimal \ssi le localisé \(S_f^{-1}\gA_f\) est un \cdi. Ce qui signifie que pour tout \(\gamma\in\gA_f\), ou bien \(\gamma\in S_f\), ou bien il existe un \(\zeta\in S_f\) tel que  \(\zeta\,\gamma=0\) dans \(\gA_f\).
Il suffit de le vérifier pour les \(\gamma\in\gA_f\) qui sont de la forme  \( \varphi(q(x))=q(\alpha) \) avec~\( q\in\AX \) car $\gA_f$ est un localisé de \(\gA[x]\). Enfin, pour \(\gamma=q(\alpha) \) on a 
\[ 
\theta_f(\gamma)=\pi(q(x))=q(\beta)=:\delta 
\] 
(notez que \(\gA[x]\subseteq \Kx\)). 
\\
Notons  \(\xi_1,\dots,\xi_n=\beta\)  les zéros de \(f\) dans \(\gK\eal\). Les valuations des \(\xi_i\) sont $w_1= \cdots =  w_{n-1}=0< w_n=v(f(0))$. 
Notons \(y=q(x)\in \gA[x]\subseteq \Kx\). 
\\
Rappelons comment se passe le test~à~\(0\) pour l'\elt~\(\pi(y)=\delta\) de~\(\gK[\beta]\). 
On calcule le \polcar \(g(T)=\chi_{y}(T)\) de  \(y\) dans la \Alg~\(\gA[x]\)  (qui est un module libre de rang \(n\) sur \(\gA\)). Par Cayley-Hamilton on a
 \(g(y)=0\), donc \(g(\delta)=g(q(\beta))=\pi(g(y))=0\) et
 \(g(\gamma)=g(q(\alpha))=\varphi(g(y))=0\).\\
Si \(g(0)\neq 0\), alors \(y\) est inversible dans \(\gK[x]\), donc \(\delta=\theta_f(\gamma)\) est inversible dans \(\gK[\beta]\) et non nul dans \(\gW\), i.e. \(\gamma\in S_f\).\\
 Si \(g(0)= 0\), alors \(g(T)=T^kh(T)\) avec \(h(0)\neq 0\) et 
 \(0<k\leq n\). 
Donc \(\gamma^k h(\gamma)=0\) dans~\(\gA_f\), d'où \(\gamma h(\gamma)=0\) car \(\gA_f\) est réduit. On a aussi \(\delta^k h(\delta)=0\) dans \(\gW\). Comme c'est un anneau intègre, \(\delta=0\) ou \(h(\delta)=0\).  
Cette disjonction est obtenue \cot selon l'\algo qui va suivre.
\\
Si \(h(\delta)=0\), on ne peut pas avoir \(\delta=0\) car sinon 
\(h(0)=h(\delta)=0\); donc \(\delta\neq 0\) et \(\gamma\in S_f\).
\\
Si \(\delta=0\), i.e. \(\gamma\in\fp_f\), comme \(h(0)\neq 0\) on a \(h(\delta)\neq 0\), et donc \(\zeta=h(\gamma)\in S_f\). On obtient \(\zeta\,\gamma=0\) dans \(\gA_f\) avec \(\zeta\in S_f\).
\\
En résumé, on a obtenu exactement ce qu'on voulait: \(\fp_f\) est un \idep minimal.
\\
Pour que la \demo \cov soit complète, rappelons comment on teste \hbox{si \(\delta=0\)} dans \(\gK[\beta]\). Il reste à traiter le cas où \(g(0)=0\)
\\
Le \pgn de \(g\) nous fournit dans le désordre les valuations $v_1\leq v_2\leq \cdots\leq v_n=\infty$ des \(q(\xi_i)\) dans  \((\gK\eal,\gV\eal)\).   
Pour savoir quelle valeur~$v_i$ correspond à \(\xi_n=\beta\), on considère l'\elt \(xq(x)=xy\) de \(\Kx\), on calcule son \polcar \(g_1(T)=\chi_{xy}(T)\) et on calcule le \pgn de~$g_1$. Ce dernier nous donne dans le désordre les valeurs des $\xi_iq(\xi_i)$. Si \(q(\beta)=0\), ce sont les mêmes valeurs $v_1\leq v_2\leq \cdots\leq v_n=\infty$ parce que \(\infty+w_n=\infty\). Et si \(q(\beta)\neq 0\), une et une seule des $v_i$ finies change, en augmentant de~$w_n$. 
\end{proof}

La \demo du \thref{thFT} de l'introduction peut être considérée comme terminée car on sait que toute extension \(\gK[\beta_1,\dots,\beta_m]\) obtenue en ajoutant des zéros à la Hensel successifs peut être obtenue en une seule étape. Ce résultat ne semble pas avoir de \demo \elr \cov. Il résulte par exemple du lemme de Hensel multivarié dont une \demo \cov est donnée dans
\cite{ACL2014}. 

Dans la section suivante, nous donnons une légère \gnn du \thref{thFeTe1}
qui règle la question sans faire appel au résultat difficile précédent.

\section{Une légère \gnn}

Nous remplaçons l'anneau local intègre \dcd dominé par un \adv, de la section précédente, par un anneau local \dcd réduit \textsl{minimalement valué}.

Cela signifie que nous donnons un groupe totalement ordonné discret \(\Gamma\), un anneau commutatif réduit \(\gA\) et une \textsl{valuation minimale} \(v:\gA\to\Gamma_\infty\), \cad une application  qui satisfait les axiomes suivants (les variables sont dans \(\gA\)).
\begin{enumerate}
\item \(v(a)\geq 0\).
\item \(v(ab)=v(a)+v(b)\).
\item \(v(a)=0\Leftrightarrow a\in\Ati\) et \(v(a)>0\Leftrightarrow a\in\Rad\gA\). 
\item \(v(a+b)\geq \min(v(a),v(b))\).
\item \(v(a)< \infty \hbox{ ou } \exists b\, (v(b)< \infty \hbox{ et } ba=0)\) \quad (minimalité).
\end{enumerate}

\noindent Dans ces conditions, on a:

\smallskip 
\begin{itemize}
\item \(\gA\) est un anneau local \dcd,
\item  le sous-ensemble \(\fp:=\sotq{a\in\gA}{v(a)=\infty}\) est un \idemi déta\-chable,
\item  $v$ définit sur le corps de fractions \(\gK\) de \(\gA':=\gA/\fp\) une valuation dont l'anneau \(\gV:=\sotq{x\in\gK}{v(x)\geq 0}\) domine \(\gA'\); on note \(\theta\) le morphisme naturel composé \(\gA\to\gA'\to\gK\).
\end{itemize}

\smallskip\noindent  On considère alors la situation suivante.

\smallskip 
\begin{itemize}
\item  \((\gK\eal,v)\) est une extension hypothétique du corps valué \((\gK,v)\) avec \(\gK\eal\) une clôture \agq hypothétique de \(\gK\), et l'on considère la sous-extension  \((\gK\He,v)\) obtenue en ajoutant des zéros à la Hensel successifs; 
\item  \(t\in\AX\) est un \pol spécial; $f(X)=t(X+1)$ est un \pol de Nagata; 
\item  \(\alpha\) est le zéro de \(f\) dans \(\fm_{\gA_f}\subseteq \fm_{\gA\he}\);
\item  \(\beta\) est le zéro de \(f\) dans \(\fm_{\gV\He}\), les autres zéros de \(f\) sont dans \((\gV\eal)\eti\);
\item  \((\gK[\beta],v)\)  est l'extension de \((\gK,v)\) construite dans la proposition~\ref{lem NPH};
\item  \(\theta_f:\gA_f\to\gK[\beta]\) est l'unique morphisme  de \Algs 
tel que \(\theta_f\circ \varphi=\pi\circ \theta[x]\), en particulier \(\theta_f(\alpha)=\beta\). 
\end{itemize}
\vspace{-.2em}
\[
\xymatrix @R=2em @C=3em{
&\gA\ar[rr]^{\displaystyle \psi}\ar[ddd]_{\displaystyle \jmath_0} \ar[dl]_{\displaystyle \jmath_\gA}&&\gA' \ar[ddd]_{\displaystyle \jmath_1} \ar[dl]_{\displaystyle \jmath_{\gA'}} \ar[rrr]^{\displaystyle \jmath} && 
& \gK\ar[ddd]_{\displaystyle \jmath_2}\ar[dl] _{\displaystyle \jmath_\gK}
\\
\gA[x]\ar[rr]^{~~~~~~~~~\displaystyle \psi[x]}\ar[ddr]_{\displaystyle \varphi}&&\gA'[x] \ar[rrr]^{~~~\displaystyle \jmath[x]} \ar[ddr]_{\displaystyle \varphi'}&&&\gK[x]\ar[ddr]_{\displaystyle \pi}
\\~\\
&\gA_f\ar[rr]_{\displaystyle \psi_f}&&\gA'_f\ar[rrr]_{\displaystyle {\lambda}}&&&\gK[\beta]
}
\]

La partie droite du diagramme ci-dessus correspond à la situation examinée dans la section précédente, en y remplaçant \(\gA\) par \(\gA'\).

La situation est plus simple avec le diagramme ci-dessous, extrait du précédent.
\vspace{-.1em}
\[
\xymatrix @R=2em @C=3em{
&\gA\ar[rrr]^{\displaystyle \theta}\ar[ddd]_{\displaystyle \jmath_0} \ar[dl]_{\displaystyle \jmath_\gA}&&& \gK\ar[ddd]_{\displaystyle \jmath_2}\ar[dl] _{\displaystyle \jmath_\gK}
\\
\gA[x]\ar[rrr]^{~~~~\displaystyle \theta[x]}\ar[ddr]_{\displaystyle \varphi}&&&\gK[x]\ar[ddr]_{\displaystyle \pi}
\\
\\
&\gA_f\ar[rrr]^{\displaystyle \theta_f}&&&\gK[\beta]
}
\]
On a  $\gK[x]=\aqo\KX f$,  $\gA[x]=\aqo\AX f$, \(\alpha=\varphi(x)\), \(\beta=\pi(x)\) et \(\theta_f(\alpha)=\beta\).
 
\begin{theorem} \label{thFeTe2}
Le noyau de \(\theta_f\) est un \idemi de \(\gA_f\). 
\end{theorem}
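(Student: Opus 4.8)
The plan is to deduce Theorem~\ref{thFeTe2} from Theorem~\ref{thFeTe1} applied, no longer to \(\gA\), but to the integral quotient \(\gA':=\gA/\fp\). As the right-hand part of the diagram above makes plain, \(\gA'\) satisfies all the hypotheses of Section~\ref{sec2}: it is integral since \(\gA\) is reduced and \(\fp\) is a detachable prime, it is local and residually discrete with the same residue field as \(\gA\), its fraction field is \(\gK\), and \(\gV\) is a residually discrete valuation domain of \(\gK\) dominating it. The image \(t'\in\gA'[X]\) of \(t\) is again a special polynomial, with Nagata polynomial \(f'=t'(X+1)\); as \(\gA\to\gV\) factors through \(\gA'\), the polynomials \(f\) and \(f'\) have the same image in \(\gV[X]\), so \(\beta\) is unchanged. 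Moreover \(\gA'[x]=\gA[x]/\fp\gA[x]\) and \(U_{f'}\) is the image of \(U_f\) (the residue fields coinciding), so --- localization commuting with passage to the quotient --- one obtains a canonical identification \(\gA'_f=\gA_f/\fp\gA_f\), under which \(\psi_f\) is the quotient morphism and \(\theta_f=\lambda\circ\psi_f\), where \(\lambda\colon\gA'_f\to\gK[\beta]\) is the morphism ``\(\theta_f\) of Section~\ref{sec2}'' attached to \(\gA'\). Theorem~\ref{thFeTe1} (together with the inclusion \(\gW\hookrightarrow\gK[\beta]\)) then gives that \(\fp':=\Ker\lambda\) is a minimal prime of \(\gA'_f\), i.e.\ that \((\gA'_f)_{\fp'}\) is a discrete field.

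It then remains to transport this conclusion back to \(\gA_f\). Put \(\fp_f:=\Ker\theta_f=\psi_f^{-1}(\fp')\); this is a detachable prime of \(\gA_f\), being the kernel of a morphism into the integral ring \(\gK[\beta]\), and \(\gA_f\) is reduced by Lemma~\ref{lemHensNormal}. So it suffices to show that \(S_f^{-1}\gA_f\) is a discrete field, where \(S_f:=\gA_f\setminus\fp_f\). The crucial observation is that every \(a\in\fp\) dies in \(S_f^{-1}\gA_f\): since \(v(a)=\infty\), the minimality axiom provides \(b\in\gA\) with \(v(b)<\infty\) --- hence \(b\notin\fp\) --- and \(ab=0\) in \(\gA\); then \(ab=0\) in \(\gA_f\), whereas \(\theta_f(b)\) equals the image of \(\theta(b)\neq0\) under the embedding \(\gK\hookrightarrow\gK[\beta]\), so \(b\in S_f\). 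Consequently \(\fp\gA_f\) is annihilated in \(S_f^{-1}\gA_f\); and, \(\psi_f\) being surjective so that \(\psi_f(S_f)=\gA'_f\setminus\fp'\), we get
\[
S_f^{-1}\gA_f=\psi_f(S_f)^{-1}\bigl(\gA_f/\fp\gA_f\bigr)=(\gA'_f)_{\fp'},
\]
which is a discrete field by the first paragraph. Hence \(\fp_f\) is a minimal prime of \(\gA_f\).

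I expect the only delicate part to be the constructive bookkeeping behind the identification \(\gA'_f=\gA_f/\fp\gA_f\) and the equality \(\theta_f=\lambda\circ\psi_f\) (localization- and diagram-chasing, essentially already prepared by the diagrams displayed above), the genuinely new arithmetic input being the single appeal to the minimality axiom. Alternatively, one can avoid invoking Theorem~\ref{thFeTe1} and argue directly in the spirit of its proof: for \(\gamma=q(\alpha)\) with \(q\in\AX\), compute \(\delta=q(\beta)\in\gK[\beta]\) by Proposition~\ref{prop calc Hens}; if \(\delta\neq0\) then \(\gamma\in S_f\); if \(\delta=0\), let \(k\) be the order of vanishing at \(0\) of the reduction modulo \(\fp\) of the characteristic polynomial \(\chi_y\) of \(y=q(x)\) (computable since \(\fp\) is detachable), annihilate the coefficients \(\chi_{y,0},\dots,\chi_{y,k-1}\in\fp\) by one common \(s\notin\fp\) via the minimality axiom, and use that \(\gA_f\) is reduced to extract from \(\chi_y(\gamma)=0\) an element \(\zeta=s\,h(\gamma)\in S_f\), with \(h(0)=\chi_{y,k}\notin\fp\), satisfying \(\zeta\gamma=0\).
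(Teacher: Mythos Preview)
Your reduction to Theorem~\ref{thFeTe1} applied to \(\gA'=\gA/\fp\) is correct: the identification \(\gA'_{f'}\simeq\gA_f/\fp\gA_f\) follows from the standard exactness of localization, and the single appeal to the minimality axiom to annihilate each \(a\in\fp\) by some \(b\in S\subseteq S_f\) is exactly what is needed to force \(S_f^{-1}\gA_f\simeq(\gA'_{f'})_{\fp'}\). The alternative you sketch at the end is in fact the route the paper takes.

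The paper does \emph{not} reduce to Theorem~\ref{thFeTe1}; it reproves everything directly at the level of \(\gA\), computing the characteristic polynomial \(g=\chi_y\in\gA[T]\), pushing it to \(g_1\in\gK[T]\), writing \(g_1=T^kh_1\) with \(h_1(0)\neq0\), and then using the minimality axiom coefficientwise on \(a_0,\dots,a_{k-1}\in\fp\) to obtain a single \(b\in S\) with \(bg=T^k\,bh\), whence \(b\,h(\gamma)\,\gamma=0\) in the reduced ring \(\gA_f\). The gain of the paper's choice is self-containment: as the Conclusion notes, once Theorem~\ref{thFeTe2} is proved this way, Section~\ref{sec2} becomes redundant, and the same template immediately yields Theorem~\ref{thFeTe3} in the non-reduced case (replacing \(ba_j=0\) by \(ba_j\) nilpotent). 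Your route is shorter and more conceptual but keeps the dependence on Theorem~\ref{thFeTe1}; it would require a separate adaptation to handle the non-reduced annex.
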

%
\begin{proof}
Le noyau  \(\fp_f:=\Ker\theta_f\) est un \idep détachable   parce que~\(\gK[\beta]\) est un \cdi. 
Notons \[S=\gA\setminus\fp=\sotq{c\in\gA}{\theta(c)\neq 0}
\;\hbox{ et }\;
S_f=\gA_f\setminus\fp_f=\sotq{\gamma\in\gA_f}{\theta_f(\gamma)\neq 0}.\]
Comme \(\gA\) est réduit, le fait que l'\idep~\(\fp\) est minimal signifie que pour tout \(a\in\gA\),  \(a\in S\) ou il existe un \(b\in S\) tel que \(ba=0\).
De même, comme $\gA_f$ est réduit, l'\idep~\(\fp_f\) est minimal \ssi  pour tout \(\gamma\in\gA_f\), ou bien \(\gamma\in S_f\), ou bien il existe un~\(\zeta\in S_f\) tel que  \(\zeta\,\gamma=0\).
Pour démontrer cette disjonction, il suffit de le faire pour les \(\gamma\in\gA_f\) qui sont de la forme  \(\varphi(q(x))=q(\alpha) \) avec~\( q\in\AX \) car~$\gA_f$ est un localisé de~\(\gA[x]\). Notons \(q_1=\theta[X](q)\).
On  a alors 
\[ \theta_f(\gamma)=\theta_f(q(\alpha))=\pi(q_1(x))=q_1(\beta)=:\delta \] 
Notons \(y=q(x)\in \gA[x]\) et \(z=\theta[x](y)=q_1(x)\in \Kx\).
On calcule le \polcar \(g(T)=\chi_{y}(T)\) de  \(y\) dans la \Alg~\(\gA[x]\)  (qui est un module libre de rang \(n\) sur \(\gA\)). 
Le même calcul pour \(z\in\Kx\) donne le \polcar
\(g_1(T)=\chi_{z}(T)\in\gK[T]\) avec \(g_1=\theta[T](g)\):
\[g(T)=T^n+\som_{j=0}^{n-1}a_jT^j, \quad  g_1(T)=T^n+\som_{j=0}^{n-1}\theta(a_j)T^j.\] 
On a \(g(y)=0\) dans \(\gA[x]\) et \(g_1(z)=0\) dans \(\gK[x]\),
donc  \(g(\gamma)=0\) dans \(\gA_f\) et \(g_1(\delta)=0\) dans \(\gK[\beta]\). On a \(g_1(0)= 0\) ou \(g_1(0)\neq  0\) car \(\gK\) est discret; et \(\delta=0\) ou \(\delta\neq 0\) car  \(\gK[\beta]\) est discret.
\\
Supposons d'abord que  \(g_1(0)\neq 0\), alors \(\delta\neq  0\) car \(\delta= 0\) implique \(g_1(0)= 0\). Donc \(\gamma\in S_f\).  
\\
Dans le cas contraire, on écrit \(g_1(T)=T^kh_1(T)\) avec \(h_1(0)\neq 0\) dans~\(\gK\) \hbox{et  \(0< k\leq n\)}. 
On écrit \(g(T)=a(T)+T^kh(T)\) avec \(\deg_T(a)<k\).  On~a 
\(\theta[T](a)=0\) et \(\theta[T](h)=h_1\).
On~a donc \(\theta(a_j)=0\) pour \(0\leq j<k\). 
Comme \(\theta(a_j)\neq 0\) est impossible, il existe \(b_j\in S\) tel que \(b_ja_j=0\) dans \(\gA\). 
Soit \(b\) le produit des \(b_j\) pour \(0\leq j<k\). On a \(b\in S\), \(ba(T)=0\) et \(bg(T)=T^kbh(T)\).
  Comme \(\delta^kh_1(\delta)=g_1(\delta)=0\), \hbox{on a}  \(\delta=0\) ou \(h_1(\delta)=0\).\\
Si \(h_1(\delta)=0\), on ne peut pas avoir \(\delta=0\) car sinon 
\(h_1(0)=h_1(\delta)=0\). Donc \(\delta\neq 0\), \hbox{i.e. \(\gamma\in S_f\)}.
\\
Si \(\delta=0\),  on a \(\theta_f(h(\gamma))=h_1(\delta)=h_1(0)\neq 0\), et donc \(\zeta=h(\gamma)\in S_f\). 
Puisque \(bg(\alpha)=0\) et \(bg=bT^kh\) on obtient \(b\gamma^k \zeta=0\) dans \(\gA_f\) 
avec \(b\zeta\in S_f\). Enfin, comme \(\gA_f\) est réduit, \hbox{on a \(b\zeta\,\gamma=0\)} 
dans \(\gA_f\).
\\
En résumé, on a obtenu exactement ce qu'on voulait: \(\fp_f\) est un \idep minimal.
\end{proof}

\subsection*{Conclusion}

En utilisant de manière répétée le \thref{thFeTe2} et en démarrant avec la situation de la section \ref{sec2}, on obtient à tous les étages finis de la construction simultanée de \(\gA\he\) et \(\gV\He\) le résultat donné par le \thref{thFT} de de Felipe et Teissier.

\section{Annexe: une autre \gnn}


\begin{theorem} \label{thFeTe3}
Avec les mêmes hypothèses qu'au \thref{thFeTe2}, mais sans supposer \(\gA\) réduit, on a la même conclusion. 
\end{theorem}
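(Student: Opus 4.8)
Le plan est de reprendre presque mot pour mot la \demo du \thref{thFeTe2}, en observant que le caractère réduit de \(\gA\), donc de \(\gA_f\), n'y servait qu'en deux endroits, tous deux faciles à contourner moyennant un léger affaiblissement inoffensif de la \prt vérifiée en cours de route. D'abord, \(\fp_f:=\Ker\theta_f\) reste un \idep détachable de \(\gA_f\) puisque \(\gK[\beta]\) est un \cdi, et \(\gA_f=U_f^{-1}\gA[x]\) reste un localisé de \(\gA[x]=\aqo\AX f\), module libre de rang \(n\) sur \(\gA\) (ces deux faits ne demandent aucune hypothèse de réduction). On pose \(S_f=\gA_f\setminus\fp_f=\sotq{\gamma\in\gA_f}{\theta_f(\gamma)\neq 0}\). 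L'observation clé est que \(\fp_f\) est un \idep minimal \ssi l'anneau local \(S_f^{-1}\gA_f\) est \zed, \cad \ssi tout \elt y est inversible ou nilpotent; concrètement, comme \(\gA_f\) est un localisé de \(\gA[x]=\gA[\alpha]\), il suffit d'établir: pour tout \(\gamma\) de la forme \(\varphi(q(x))=q(\alpha)\) avec \(q\in\AX\), ou bien \(\gamma\in S_f\), ou bien il existe \(\zeta\in S_f\) et un entier \(n\geq 1\) tels que \(\zeta\,\gamma^n=0\) dans \(\gA_f\). Dans le cas réduit du \thref{thFeTe2}, on pouvait imposer \(n=1\); ici on se contente de \(n\geq 1\).

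On pose \(q_1=\theta[X](q)\), \(\delta=\theta_f(\gamma)=q_1(\beta)\in\gK[\beta]\), \(y=q(x)\in\gA[x]\) et \(z=\theta[x](y)=q_1(x)\in\gK[x]\). Soit \(g(T)=\chi_y(T)\) le \polcar de la multiplication par \(y\) sur le module libre \(\gA[x]\), de sorte que \(g_1(T):=\theta[T](g)=\chi_z(T)\); par Cayley-Hamilton \(g(\gamma)=0\) dans \(\gA_f\) et \(g_1(\delta)=0\) dans \(\gK[\beta]\). Comme \(\gK[\beta]\) est un \cdi, on décide si \(\delta=0\). Si \(\delta\neq 0\), alors \(\gamma\in S_f\) et c'est terminé. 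Si \(\delta=0\), alors \(g_1(0)=g_1(\delta)=0\) et l'on écrit \(g_1(T)=T^kh_1(T)\) avec \(h_1(0)\neq 0\) et \(1\leq k\leq n\), d'où \(g(T)=a(T)+T^kh(T)\) avec \(\deg_T(a)<k\), \(\theta[T](a)=0\) et \(\theta[T](h)=h_1\).

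Il reste à produire la relation de nilpotence. Chaque \coe \(a_j\) de \(a\), \cad chacun des \(k\) premiers \coes de \(g\), vérifie \(\theta(a_j)=0\), \cad \(v(a_j)=\infty\); l'axiome de minimalité (axiome~5 de la valuation minimale) fournit alors \(b_j\in S:=\sotq{c\in\gA}{\theta(c)\neq 0}\) tel que \(b_ja_j=0\) dans \(\gA\). En posant \(b=\prod_j b_j\in S\), on a \(b\,a(T)=0\), donc \(b\,g(T)=T^k\,b\,h(T)\) dans \(\gA[T]\), et en évaluant en \(\gamma\) avec \(g(\gamma)=0\) il vient \(\gamma^k\,b\,h(\gamma)=0\) dans \(\gA_f\). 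Or \(\zeta:=h(\gamma)\) vérifie \(\theta_f(\zeta)=h_1(\delta)=h_1(0)\neq 0\), donc \(\zeta\in S_f\), et \(b\in S\) s'envoie dans \(S_f\) puisque \(\theta_f(\varphi(b))=\theta(b)\neq 0\). Ainsi \(b\zeta\in S_f\) et \((b\zeta)\,\gamma^k=0\): c'est exactement la relation voulue, avec \(n=k\), et \(\fp_f\) est bien un \idep minimal.

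Je n'attends pas d'obstacle vraiment nouveau par rapport au \thref{thFeTe2}. Le seul endroit où le caractère réduit de \(\gA_f\) servait auparavant était le passage de \(\gamma^k\,b\,h(\gamma)=0\) à \(\gamma\,b\,h(\gamma)=0\), que l'on se borne désormais à ne pas effectuer, car la relation \(\gamma^k(b\zeta)=0\) atteste déjà que \(\gamma\) est nilpotent dans \(S_f^{-1}\gA_f\), et c'est tout ce qu'exige la minimalité de \(\fp_f\) pour un anneau qui n'est pas supposé réduit. Il faudra seulement vérifier, comme dans le contexte du \thref{thFeTe2}, que les \prts déduites subsistent sans réduction: \(\gA\) est encore local \dcd, l'anneau \(\gA[x]\) est encore libre de rang \(n\) sur \(\gA\), et \(\fp=\sotq{a\in\gA}{v(a)=\infty}\) est encore un \idep minimal détachable, avec \(\gA/\fp\) sous-anneau du \cdi \(\gK\) et \(S^{-1}\gA=\gK\).
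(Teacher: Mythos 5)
Votre plan est le bon et la démonstration suit de près celle du \thref{thFeTe2}; mais il y a un point précis sur lequel votre argument et celui du papier divergent, et qui constitue le vrai contenu du \thref{thFeTe3}. Vous appliquez l'axiome~5 de la valuation minimale à la lettre pour obtenir \(b_ja_j=0\) et donc \(b\,a(T)=0\), \(b\,g(T)=T^k\,b\,h(T)\), puis \((b\zeta)\gamma^k=0\). Or le papier n'utilise pas \(b_ja_j=0\), mais seulement \(b_ja_j\) \emph{nilpotent}. C'est délibéré: l'hypothèse constructive naturelle, pour un anneau non réduit, que \(\fp=\Ker\theta\) est un \idep minimal, signifie que le localisé \(S^{-1}\gA\) est \zed et non pas qu'il soit un corps discret, autrement dit: pour tout \(a\), \(a\in S\) ou il existe \(b\in S\) avec \(ba\) nilpotent (et non \(ba=0\)). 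L'axiome~5 littéral avec \(ba=0\) est strictement plus fort dès que \(\gA\) n'est pas réduit, et il exclut des exemples élémentaires comme \(k[\epsilon]/(\epsilon^2)\), où aucun élément de \(S=\Ati\) n'annule \(\epsilon\) bien que \(\epsilon\) soit nilpotent; c'est précisément pour inclure ces cas que le papier relâche l'hypothèse.

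Sous cette hypothèse affaiblie, votre étape \(b\,a(T)=0\) tombe, et il faut l'argument de puissance du papier: comme chaque \(ba_j\) est nilpotent, \(b\,a(T)\) est un \pol à coefficients nilpotents, donc il existe \(N\) tel que \((ba(T))^N=0\), \cad \(b^Na(T)^N=0\) avec \(b^N\in S\). Ensuite, de \(g(\gamma)=0\) et \(bg(T)=ba(T)+bT^kh(T)\) on tire \(ba(\gamma)=-b\zeta\gamma^k\), puis en élevant à la puissance \(N\): \(0=(ba(\gamma))^N=\pm\,b^N\zeta^N\gamma^{Nk}\) avec \(b^N\zeta^N\in S_f\). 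C'est la torsion supplémentaire qui rend la \demo valable sous l'hypothèse minimale; votre rédaction, correcte sous la lecture littérale de l'axiome~5, ne traite pas ce cas. Pour être fidèle au papier et couvrir tous les anneaux non réduits pertinents, il faut donc remplacer \gui{$b_ja_j=0$} par \gui{$b_ja_j$ nilpotent} et ajouter le passage à la puissance $N$\hbox{-ième}.
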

\begin{proof}
Le noyau  \(\fp_f:=\Ker\theta_f\) est un \idep détachable   parce que~\(\gK[\beta]\) est un \cdi. 
Notons \[S=\gA\setminus\fp=\sotq{c\in\gA}{\theta(c)\neq 0}
\;\hbox{ et }\;
S_f=\gA_f\setminus\fp_f=\sotq{\gamma\in\gA_f}{\theta_f(\gamma)\neq 0}.\]
Le fait que l'\idep \(\fp\) est minimal signifie que pour tout \(a\in\gA\),  \(a\in S\) ou il existe un \(b\in S\) tel que \(ba\) est nilpotent.
De même, l'\idep \(\fp_f\) est minimal \ssi  pour tout \(\gamma\in\gA_f\), ou bien \(\gamma\in S_f\), ou bien il existe un \(\zeta\in S_f\) tel que  \(\zeta\,\gamma\) est nilpotent.
Pour démontrer cette disjonction, il suffit de le faire pour les \(\gamma\in\gA_f\) qui sont de la forme  \(\varphi(q(x))=q(\alpha) \) avec~\( q\in\AX \) car~$\gA_f$ est un localisé de~\(\gA[x]\). Notons \(q_1=\theta[X](q)\).
On  a alors 
\[ \theta_f(\gamma)=\theta_f(q(\alpha))=\pi(q_1(x))=q_1(\beta)=:\delta \] 
Notons \(y=q(x)\in \gA[x]\) et \(z=\theta[x](y)=q_1(x)\in \Kx\).
On calcule le \polcar \(g(T)=\chi_{y}(T)\) de  \(y\) dans la \Alg~\(\gA[x]\)  (qui est un module libre de rang \(n\) sur \(\gA\)). 
Le même calcul pour \(z\in\Kx\) donne le \polcar
\(g_1(T)=\chi_{z}(T)\in\gK[T]\) avec \(g_1=\theta[T](g)\):
\[g(T)=T^n+\som_{j=0}^{n-1}a_jT^j, \quad  g_1(T)=T^n+\som_{j=0}^{n-1}\theta(a_j)T^j.\] 
On a \(g(y)=0\) dans \(\gA[x]\) et \(g_1(z)=0\) dans \(\gK[x]\),
donc  \(g(\gamma)=0\) dans \(\gA_f\) et \(g_1(\delta)=0\) dans \(\gK[\beta]\). On a \(g_1(0)= 0\) ou \(g_1(0)\neq  0\) car \(\gK\) est discret; et \(\delta=0\) ou \(\delta\neq 0\) car  \(\gK[\beta]\) est discret.
\\
Supposons d'abord que  \(g_1(0)\neq 0\), alors \(\delta\neq  0\) car \(\delta= 0\) implique \(g_1(0)= 0\). Donc \(\gamma\in S_f\).  
\\
Dans le cas contraire, on écrit \(g_1(T)=T^kh_1(T)\) avec \(h_1(0)\neq 0\) dans~\(\gK\) \hbox{et  \(0< k\leq n\)}. On écrit \(g(T)=a(T)+T^kh(T)\) avec \(\deg_T(a)<k\). On a 
\(\theta[T](a)=0\) et \(\theta[T](h)=h_1\). On~a donc \(\theta(a_j)=0\) pour \(0\leq j<k\), et \(\theta[T](h)=h_1\). 
Comme \(\theta(a_j)\neq 0\) est impossible, il existe \(b_j\in S\) tel que \(b_ja_j\) est nilpotent dans \(\gA\). 
Soit \(b\) le produit des \(b_j\) pour \(0\leq j<k\). On a \(b\in S\), et, pour un entier \(N\) assez grand, \(ba(T)^N=0\). 
\\
Comme \(\delta^kh_1(\delta)=g_1(\delta)=0\), \hbox{on a}  \(\delta=0\) ou \(h_1(\delta)=0\).\\
Si \(h_1(\delta)=0\), on ne peut pas avoir \(\delta=0\) car sinon 
\(h_1(0)=h_1(\delta)=0\). Donc \(\delta\neq 0\), \hbox{i.e. \(\gamma\in S_f\)}.
\\
Si \(\delta=0\),  on a \(\theta_f(h(\gamma))=h_1(\delta)=h_1(0)\neq 0\), et donc \(\zeta=h(\gamma)\in S_f\). 
Puisque \(bg(\alpha)=0\) et \(bg(T)=ba(T)+bT^{k}h\) on obtient \(ba(\gamma)+b\zeta\,\gamma^{k} =0\) dans \(\gA_f\). 
D'où, puisque \(ba(T)^N=0\),
 \[0=(-ba(\gamma))^N=b^N\zeta^N\,\gamma^{Nk} =0\] avec \(b^N\zeta^N\in S_f\).
\\
En résumé, on a obtenu exactement ce qu'on voulait: \(\fp_f\) est un \idep minimal.
\end{proof}

\addcontentsline{toc}{section}{Références}

\small
\bibliographystyle{plain-fr}
\bibliography{AlrecobibFr}

\end{document}